\documentclass[a4paper, reqno]{amsart}
\voffset=-1.75cm \hoffset=-2cm \textheight=24.0cm \textwidth=17cm
\usepackage[T1]{fontenc}
\usepackage[english]{babel}
\usepackage{amsmath}
\usepackage{amsthm}
\usepackage{xfrac}
\usepackage{amssymb, enumerate}
\usepackage{amsfonts}
\usepackage{dsfont}
\usepackage{graphicx}
\usepackage[lofdepth,lotdepth]{subfig}
\usepackage{mathtools,mathrsfs} 
\usepackage{cases} 
\usepackage{tikz}

\usepackage[shortlabels]{enumitem}

\usepackage{hyperref} 
\usepackage[capitalise, nameinlink, noabbrev]{cleveref} 
\hypersetup{colorlinks=true,
	linkcolor=blue,
	citecolor=red,
}

\usepackage{amsbsy,upref,color,amscd}
\usepackage[active]{srcltx}
\usepackage{bbm}
\usepackage{algorithm}
\usepackage{algorithmicx}
\usepackage{algpseudocode}
\usetikzlibrary{shapes,snakes,patterns}
\usetikzlibrary{decorations.text}
\usetikzlibrary{decorations.pathmorphing, decorations.text}
\usepackage{subfig}
\usepackage{eurosym}
\usepackage{fancyhdr}
\usepackage{colorspace}

\usepackage{environ}
\makeatletter
\newsavebox{\measure@tikzpicture}
\NewEnviron{scaletikzpicturetowidth}[1]{%
  \def\tikz@width{#1}%
  \begin{lrbox}{\measure@tikzpicture}%
  \BODY
  \end{lrbox}%
  \pgfmathparse{#1/\wd\measure@tikzpicture}%
  \BODY
}
\makeatother
\usepackage{sansmath}
\usetikzlibrary{shadings,intersections,patterns,calc,angles}

\usepackage{rotating,multirow,booktabs,float}
\usepackage{pgfplots}

\pgfplotsset{compat=1.10}
\usepgfplotslibrary{fillbetween}
\usepgfplotslibrary{polar}
\usepackage{tkz-euclide}
\usepackage{environ}
\makeatletter

\makeatother
\usetikzlibrary{arrows,automata}
\usetikzlibrary{positioning}
\usetikzlibrary{calc,through}

\tikzset{
    state/.style={
           rectangle,
           rounded corners,
           draw=black, very thick,
           minimum height=2em,
           inner sep=2pt,
           text centered,
           },
}

\usepackage{pgf}
\usepackage{xcolor}

\usepackage{etoolbox}%
\usepackage{cfr-lm}

\def\ds{\displaystyle}
\def\eps{{\varepsilon}}
\def\N{\mathbb{N}}

\def\R{\mathbb{R}}

\def\HH{\mathcal{H}}

\definecolor{racing}{rgb}{0.7,0.1,0.2}
\definecolor{french}{rgb}{0,0.2,0.7}



\newcommand{\diff}{\!\setminus\!}

\newcommand{\Om}{\Omega}


\def\HH{\mathcal{H}}

\def\XXint#1#2#3{{\setbox0=\hbox{$#1{#2#3}{\int}$ }
\vcenter{\hbox{$#2#3$ }}\kern-.6\wd0}}




\newtheorem{proposition}{Proposition}[section]
\newtheorem{theorem}[proposition]{Theorem}
\newtheorem{corollary}[proposition]{Corollary}
\newtheorem{lemma}[proposition]{Lemma}

\theoremstyle{definition}
\newtheorem{definition}[proposition]{Definition}
\newtheorem{remark}[proposition]{Remark}


\title[Epsilon-regularity for a free boundary system]{Epsilon-regularity for the solutions of a free boundary system}

\author{Francesco Paolo Maiale, Giorgio Tortone and Bozhidar Velichkov }

\address {Francesco Paolo Maiale \newline \indent
	Scuola Normale Superiore\newline \indent
	Piazza dei Cavalieri 7, 56126 Pisa, Italy}
\email{francesco.maiale@sns.it}

\address {Giorgio Tortone \newline \indent
	Dipartimento di Matematica, Universit\`a di Pisa \newline \indent
	Largo Bruno Pontecorvo, 5, I--56127 Pisa, Italy}
\email{giorgio.tortone@dm.unipi.it}

\address {Bozhidar Velichkov \newline \indent
Dipartimento di Matematica, Universit\`a di Pisa \newline \indent
Largo Bruno Pontecorvo, 5, I--56127 Pisa, Italy}
\email{bozhidar.velichkov@unipi.it}


\thanks{{\bf Acknowledgments.}
G.T. and B.V. are supported by the European Research Council (ERC), EU Horizon 2020 programme, through the project ERC VAREG - \it Variational approach to the regularity of the free boundaries \rm (No. 853404).}

\keywords{regularity of free boundaries, free boundary system, viscosity solutions, epsilon-regularity, improvement of flatness, boundary Harnack}
\subjclass[2010]{35R35, 49Q10, 47A75}

\begin{document}

\begin{abstract}

This paper is dedicated to a free boundary system arising in the study of a class of shape optimization problems. The problem involves three variables: two functions $u$ and $v$, and a domain $\Omega$; with $u$ and $v$ being both positive in $\Omega$, vanishing simultaneously on $\partial\Omega$ and satisfying an overdetermined boundary value problem involving the product of their normal derivatives on $\partial\Omega$. Precisely, we consider solutions $u, v \in C(B_1)$ of 
$$-\Delta u= f \quad\text{and} \quad-\Delta v=g\quad\text{in}\quad \Omega=\{u>0\}=\{v>0\}\ ,\qquad \frac{\partial u}{\partial n}\frac{\partial v}{\partial n}=Q\quad\text{on}\quad \partial\Omega\cap B_1.$$
Our main result is an epsilon-regularity theorem for viscosity solutions of this free boundary system. We prove a partial Harnack inequality near flat points for the couple of auxiliary functions $\sqrt{uv}$ and $\frac12(u+v)$. Then, we use the gained space near the free boundary to transfer the improved flatness to the original solutions. Finally, using the partial Harnack inequality, we obtain an improvement-of-flatness result, which allows to conclude that flatness implies $C^{1,\alpha}$ regularity.
\end{abstract}
\maketitle

\section{Introduction}\label{s:intro}

Let $u, v \in C(B_1)$ be two continuos non-negative functions on the unit ball in $\R^d$ such that
$$\Omega:=\{u>0\}=\{v>0\}.$$
Suppose that $u$ and $v$ are also solutions of the free boundary problem
\begin{align}
-\Delta u=0&\quad\text{in}\quad \Omega,\label{e:equation-u}\\
-\Delta v=0&\quad\text{in}\quad\Omega,\label{e:equation-v}\\
\frac{\partial u}{\partial n}\frac{\partial v}{\partial n}=1&\quad\text{on}\quad \partial\Omega\cap B_1,\label{e:boundary_condition}
\end{align}
where the two equations \eqref{e:equation-u} and \eqref{e:equation-v} hold in the classical sense in the open set $\Omega$. On the other hand, since we will not assume that $\Omega$ is regular, the boundary condition \eqref{e:boundary_condition} is to be intended in a generalized sense. Following the classical approach of Caffarelli \cite{caf1,caf2}, for simplicity, in the introduction and in \cref{t:main}, we will assume that  \eqref{e:boundary_condition} holds in the sense of \cref{d:solutions} below. Our main $\eps$-regularity result applies to an even more general notion of solution, but in order to avoid technicalities in the introduction, we postpone this discussion to \cref{s:the-notion-of-solution}.

\begin{definition}[Definition of solutions]\label{d:solutions}
We say that \eqref{e:boundary_condition} holds,
if at any point $x_0\in \partial\Omega\cap B_1$ at which $\partial\Omega$ admits a one-sided tangent ball we have that the functions $u$ and $v$ can be expanded as
\begin{equation}
u(x)=\alpha \big((x-x_0)\cdot\nu\big)_++\ o\big(|x-x_0|\big),
\end{equation}
\begin{equation}
v(x)=\beta \big((x-x_0)\cdot \nu\big)_++\ o\big(|x-x_0|\big),
\end{equation}
where $\nu$ is a unit vector and $\alpha$ and $\beta$ are positive real numbers such that $\alpha\beta=1.$
\end{definition}

Our main result is a regularity theorem which applies to solutions, which are sufficiently flat in the sense of the following defintion.

\begin{definition}[Definition of flatness]\label{d:flatness}
We say that $u$ and $v$ are $\varepsilon$-flat in $B_1$, if there is a unit vector $\nu\in \partial B_1$ and  positive constants  $\alpha$ and $\beta$ such that $\alpha\beta=1$ and
\begin{equation}
\alpha\,\big(x\cdot\nu-\varepsilon\big)_+\le u(x)\le \alpha\,\big(x\cdot\nu+\varepsilon\big)_+\quad\text{for every}\quad x\in B_1,
\end{equation}	
\begin{equation}
\beta\,\big(x\cdot\nu-\varepsilon\big)_+\le v(x)\le \beta\,\big(x\cdot\nu+\varepsilon\big)_+\quad\text{for every}\quad x\in B_1.
\end{equation}	
We will also say that $u$ and $v$ are $\varepsilon$-flat in the direction $\nu$.
\end{definition}

We prove the following theorem.

\begin{theorem}\label{t:main}	
There is a constant $\varepsilon_0>0$ such that the following holds.\\ Let $u$ and $v$ be two non-negative continuous functions on $B_1$ and let $\Omega:=\{u>0\}=\{v>0\}.$\\
If $u$ and $v$ are solutions of \eqref{e:equation-u}-\eqref{e:equation-v}-\eqref{e:boundary_condition} and are $\varepsilon$-flat in $B_1$, for some $\varepsilon\in(0,\varepsilon_0]$, then $\partial\Omega$ is $C^{1,\alpha}$ in $B_{\sfrac12}$.
\end{theorem}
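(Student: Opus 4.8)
The plan is to follow the now-classical Caffarelli strategy for $\eps$-regularity of free boundaries, but adapted to the fact that we do not control $u$ and $v$ separately near the free boundary—only the product of their normal derivatives is prescribed. The key observation driving the argument is that the two auxiliary functions
\[
w:=\sqrt{uv}\qquad\text{and}\qquad z:=\tfrac12(u+v)
\]
behave much better than $u$ and $v$: on $\Omega$ both are (sub/super)harmonic in a controlled way, and on $\partial\Omega$ the constraint $\alpha\beta=1$ forces the blow-up of $w$ to be exactly the half-plane solution $(x\cdot\nu)_+$, with no free multiplicative constant. Thus $w$ "sees" a Bernoulli-type free boundary condition $|\nabla w|=1$ on $\partial\Omega$, while $z$ is a harmless companion that is comparable to $w$. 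So the first step is to record these algebraic/PDE facts about $w$ and $z$ and to translate $\eps$-flatness of $(u,v)$ into $\eps'$-flatness of $w$ (and a matching control on $z$).

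The heart of the argument is then a \emph{partial Harnack inequality} near flat points for the couple $(w,z)$: if at some scale $w$ is trapped between $(x\cdot\nu-\eps)_+$ and $(x\cdot\nu+\eps)_+$ (and $z$ correspondingly controlled), then at a smaller but fixed scale the flatness improves by a universal factor, i.e. $w$ is trapped in a strip of width $(1-c)\eps$ around a slightly rotated plane. The mechanism is the usual one—interior Harnack in $\Omega$ applied to the positive quantities $w-(x\cdot\nu-\eps)_+$ and $(x\cdot\nu+\eps)_+-w$, propagated up to the free boundary using the prescribed one-sided tangent balls and the viscosity formulation. The subtlety here, compared to the scalar Bernoulli problem, is that the gain of room for $w$ near $\partial\Omega$ must be \emph{transferred back to $u$ and $v$ individually}: once we know $w=\sqrt{uv}$ is squeezed into a thin strip, we combine this with the separate (crude, but still $\eps$-flat) two-sided bounds on $u$ and $v$ and the identity $u/v\to(\alpha/\beta)^2$ at regular points to deduce improved two-sided bounds for $u$ and $v$ themselves. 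This "transfer" step—going from improved flatness of the geometric mean to improved flatness of each factor—is where the real work lies, and I expect it to be the main obstacle: one has to rule out that $u$ and $v$ oscillate in opposite directions in a way that keeps $w$ flat while each of them stays merely $\eps$-flat.

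With the partial Harnack inequality in hand, the next step is the \emph{improvement-of-flatness} lemma in its iterable form: there exist universal $\rho\in(0,1)$ and $\eps_0>0$ so that if $u,v$ are $\eps$-flat in $B_1$ in direction $\nu$ with $\eps\le\eps_0$, then (after rescaling by $1/\rho$) they are $\rho^\alpha\eps$-flat in $B_\rho$ in some direction $\nu'$ with $|\nu-\nu'|\lesssim\eps$. This is obtained by the standard compactness/linearization argument: normalize the strip width to $1$, let $\eps\to0$, and show that the (Lipschitz, by the partial Harnack step) limits of the normalized functions $\frac{u-\alpha(x\cdot\nu)_+}{\alpha\eps}$ and $\frac{v-\beta(x\cdot\nu)_+}{\beta\eps}$ solve—in the viscosity sense—a linearized free boundary system on the half-space $\{x\cdot\nu>0\}$. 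The linearized condition on the flat free boundary, coming from differentiating $\partial_n u\,\partial_n v=1$ under the constraint $\alpha\beta=1$, turns out to be a \emph{Neumann condition for the combination $\beta\,\tilde u+\alpha\,\tilde v$} (equivalently, the linearization of $\log u+\log v$), and the classical boundary-regularity estimates for harmonic functions with Neumann data give the decay of this combination; the orthogonal combination is handled using the partial-Harnack gain. One then reads off the $C^{1,\alpha}$-type estimate for the limiting free boundary, contradicting the assumption that flatness fails to improve.

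Finally, one iterates the improvement-of-flatness lemma on dyadic balls centered at free boundary points. The geometric decay of the flatness $\eps_k\le \rho^{k\alpha}\eps_0$ together with the summable rotation of the directions $\nu_k$ shows that at every point of $\partial\Omega\cap B_{1/2}$ the free boundary has a tangent plane and that the normal direction is Hölder continuous; standard arguments then upgrade this to $\partial\Omega\in C^{1,\alpha}$ in $B_{1/2}$, and the a posteriori regularity of $u$ and $v$ up to $\partial\Omega$ (hence the classical validity of $\partial_n u\,\partial_n v=1$) follows from boundary Schauder estimates. The only genuinely new ingredients relative to the one-phase Bernoulli problem are thus the algebraic reduction to $(w,z)$ and the transfer step in the partial Harnack inequality; everything else is a careful adaptation of Caffarelli's scheme and of the boundary Harnack / improvement-of-flatness machinery.
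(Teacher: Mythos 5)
Your overall architecture coincides with the paper's: the auxiliary pair $\sqrt{uv}$ and $\tfrac12(u+v)$, a partial Harnack inequality near flat points, compactness and linearization, and iteration of an improvement-of-flatness lemma. The genuine gap is in the transfer step, which you correctly single out as the crux but then propose to close by an argument that does not work. You suggest deducing improved two-sided bounds on $u$ and $v$ from the improved flatness of $\sqrt{uv}$ by combining the crude $\eps$-flatness of $u,v$ with ``the identity $u/v\to(\alpha/\beta)^2$ at regular points''. A lower bound on $\sqrt{uv}$ gives no lower bound on either factor; the blow-up coefficients $\alpha,\beta$ are attached only to boundary points admitting one-sided tangent balls (not all points, and the blow-ups need not be unique), they vary from point to point with no a priori modulus, and pointwise blow-up information carries no quantitative content at scale $B_{\sfrac12}$ with universal constants. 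The paper's key device is different: from the De Silva barrier dichotomy applied to $\sqrt{uv}$ (improvement from below) or $\tfrac12(u+v)$ (improvement from above) one retains only the \emph{geometric} conclusion that, in $B_{\sfrac14}$, the set $\Omega$ is trapped between two translates of a half-space at distance $(1-c)\eps$; then, for each of $u$ and $v$ \emph{separately}, a comparison with harmonic replacements in the slab domain together with the boundary Harnack principle converts this gained room for $\partial\Omega$ into a gain of flatness for that function (\cref{t:gain_space}). Without this (or an equivalent) mechanism, the partial Harnack inequality for the couple $(u,v)$ in \cref{t:partial_harnack1}, and hence the compactness of the linearizing sequences, does not follow from your outline.

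A secondary inaccuracy concerns the linearized problem: the combination orthogonal to the Neumann one is not ``handled using the partial-Harnack gain''. After normalizing $\alpha=\beta=1$ (allowed by the invariance of the problem under $u\mapsto cu$, $v\mapsto c^{-1}v$), the difference $D=\tfrac12(\widetilde u_\infty-\widetilde v_\infty)$ satisfies a homogeneous \emph{Dirichlet} condition on $\{x_d=0\}$, which comes directly from the Hausdorff convergence of the graphs (both normalized functions equal $-x_d/\eps_n$ on $\partial\Omega_n$), while the Neumann condition for the sum is obtained in the viscosity sense by touching $\sqrt{u_nv_n}$ from below and $\tfrac12(u_n+v_n)$ from above and invoking \cref{l:viscosity}, as in \cref{l:linearized}. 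This part of your plan is repairable, but as written the limiting system is not fully specified, and the decay estimate you need uses the Dirichlet condition for $D$ as much as the Neumann condition for the sum.
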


\cref{t:main} follows directly from \cref{t:main-main} (\cref{s:main-main}), in which we prove the same result for a more general notion of solution, which we define in \cref{s:the-notion-of-solution} in terms of the blow-ups of $u$ and $v$. The proof of \cref{t:main-main} will be given in \cref{s:partial_harnack} and \cref{s:improvement_of_flatness}.\\ 

\smallskip

The rest of the introduction is organized as follows. In \cref{sub:free-boundary} we briefly discuss the relation of the system \eqref{e:equation-u}-\eqref{e:equation-v}-\eqref{e:boundary_condition} to the well-known one-phase, two-phase and vectorial Bernoulli problems, with which it shares several key features. In \cref{sub-sketch-of-the-proof} we briefly explain the overall strategy and the novelties of the proof. Finally, in \cref{sub:shape-optimization}, we discuss the applications of our result to the theory of Shape Optimization.

\subsection{The classical one-phase, two-phase and vectorial problems}\label{sub:free-boundary} The free boundary problem \eqref{e:equation-u}-\eqref{e:equation-v}-\eqref{e:boundary_condition} is a vectorial analogue of the following classical one-phase problem
\begin{equation}\label{e:one-phase}
\begin{cases}
\Delta u=0&\text{in}\quad \Omega:=\{u>0\},\\
|\nabla u|=1&\text{on}\quad \partial\Omega\cap B_1,
\end{cases}
\end{equation}
which was introduced by Alt and Caffarelli in \cite{alca} in the early 80s.
Later, in a series of papers (see \cite{caf1,caf2} and the book \cite{cs}), Caffarelli studied the following two-phase problem, in which the solution is given by a single function $u:B_1\to\R$ that changes sign:
\begin{equation}\label{e:two-phase}
\begin{cases}
\Delta u=0&\text{in}\quad \Omega_+:=\{u>0\}\quad\text{and}\quad \Omega_-:=\{u<0\}\ ,\\
(\partial_n^+ u)^2-(\partial_n^- u)^2=1&\text{on}\quad \partial\Omega_+\cap\partial\Omega_-\cap B_1 ,\\
 \end{cases}
\end{equation}
 and where the transmission condition on the boundary $\partial\Omega_+\cap\partial\Omega_-$ is defined in terms of Taylor expansions at points with one-sided tangent ball (contained in $\Omega_+$ or $\Omega_-$), exactly as in \cref{d:solutions}; here  $n$ denotes the normal to $\partial\Omega_+\cap\partial\Omega_-$ at such points.
More recently, De Silva \cite{desilva} gave a different proof to the one-phase $\eps$-regularity theorem from \cite{alca}; the method found application to several generalizations of \eqref{e:two-phase} (see \cite{dfs,dfs2,dfs3}) and opened the way to the original two-phase problem of Alt-Caffarelli-Friedman \cite{acf}, for which the $C^{1,\alpha}$ regularity of the free boundary in every dimension was proved only recently in \cite{despve} by a similar argument. \medskip

 Inspired by a problem arising in the theory of shape optimization, a cooperative vectorial version of the one-phase problem was introduced in \cite{csy}, \cite{krli} and \cite{mtv}. In this case, the solutions are vector-valued functions  $$U=(u_1,\dots,u_k):B_1\to\R^k$$ satisfying
  \begin{equation}\label{e:vectorial}
 \Delta U=0\quad\text{in}\quad \Omega:=\{|U|>0\}\ ,\qquad
 \sum_{j=1}^k|\nabla u_j|^2=1\quad\text{on}\quad \partial\Omega\cap B_1.
 \end{equation}
The regularity of the vectorial free boundaries turned out to be quite challenging, especially when it comes to viscosity solutions. This is mainly due to the fact that the regularity techniques, based on the maximum principle and on the comparison of the solutions with suitable test functions (see for instance \cite{desilva} and \cite{despve}), are in general hard to implement in the case of systems. Epsilon-regularity theorems for the vectorial problem \eqref{e:vectorial} were proved in \cite{csy, krli, krli2, mtv, mtv2, spve} and more recently, in \cite{desilva-tortone}, where the regularity of the flat free boundaries was obtained directly for viscosity solutions.

\medskip

\subsection{Outline of the paper and sketch of the proof}\label{sub-sketch-of-the-proof} The free boundary problem \eqref{e:equation-u}-\eqref{e:equation-v}-\eqref{e:boundary_condition} is also a vectorial problem and arises in the study of a whole class of shape optimization problems (which we will discuss in \cref{sub:shape-optimization}). On the other hand, unlike the one-phase (\cite{alca}), the two-phase (\cite{acf}) and the vectorial problem \cite{csy,mtv} it doesn't have an underlying variational structure in terms of the functions $u$ and $v$, thus purely variational arguments as the epiperimetric inequality (see \cite{spve}) cannot be applied.

Thus, we prove an improvement-of-flatness result for solutions of \eqref{e:equation-u}-\eqref{e:equation-v}-\eqref{e:boundary_condition} from which \cref{t:main} follows by a standard argument (for more details we refer for instance to \cite{velectures}). Precisely, the aim of the paper is to prove the following theorem (see also the more general \cref{t:improvement_of_flatness}).

\begin{theorem}[Improvement of flatness]\label{t:improvement_of_flatness-intro}
There are dimensional constants $\eps_0>0$ and $C>0$ such that the following holds.
 Let $u:B_1\to\R$ and $v:B_1\to\R$ be two continuous non-negative functions, which are also solutions to \eqref{e:equation-u}-\eqref{e:equation-v}-\eqref{e:boundary_condition}.
Let $\Omega:=\{u>0\}=\{v>0\}$ and $0\in\partial\Omega$.
	If $u$ and $v$ satisfy
	$$(x_d - \eps)_+ \le u(x) \le (x_d + \eps)_+ \qquad \text{and} \qquad  (x_d  - \eps)_+ \le v(x) \le (x_d + \eps)_+ \qquad\text{in } B_1,$$
	for some $\eps < \eps_0$, then there are a unit vector $\nu \in \R^d$ with $|\nu - e_d| \le C \eps$ and a radius $\rho\in(0,1)$ such that
	$$ \tilde{\alpha}(x \cdot \nu - \sfrac{\eps}2)_+ \le \frac{u(\rho x)}{\rho} \le \tilde{\alpha}(x \cdot \nu + \sfrac{\eps}2)_+ \quad \text{and} \quad\tilde{\beta}(x \cdot \nu - \sfrac{\eps}2)_+ \le \frac{v(\rho x)}{\rho}  \le \tilde{\beta}(x \cdot \nu + \sfrac{\eps}2)_+  $$
	for all $x \in B_1$, where $\tilde{\alpha}$ and $\tilde{\beta}$ are such that $\widetilde\alpha\widetilde\beta=1$, $|1 - \tilde{\alpha}|\le C\eps$ {and} $|1-\tilde{\beta}| \le C\eps.$
\end{theorem}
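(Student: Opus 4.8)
The proof follows De Silva's scheme for the one‑phase problem, but the maximum‑principle arguments cannot be run on $u$ and $v$ directly, since individually they carry no canonical boundary slope. The key is to pass to the symmetric auxiliary pair
\[
w:=\sqrt{uv}\qquad\text{and}\qquad z:=\tfrac12(u+v),
\]
which satisfy $0\le w\le z$ in $B_1$, vanish on the common free boundary $\partial\Omega=\{u>0\}=\{v>0\}$, and, on flat portions of $\partial\Omega$ where $u\sim\alpha(x\cdot\nu)_+$ and $v\sim\beta(x\cdot\nu)_+$ with $\alpha\beta=1$, have boundary slopes $\partial_n w=\sqrt{\alpha\beta}=1$ and $\partial_n z=\tfrac12(\alpha+\beta)\ge1$ (the latter equal to $1$ up to an error $O(\alpha-\beta)^2$). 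A direct computation using $\Delta u=\Delta v=0$ in $\Omega$ gives $\Delta z=0$ and
\[
\Delta w=\frac{\nabla u\cdot\nabla v-|\nabla w|^2}{w}=-\frac{|v\nabla u-u\nabla v|^2}{4(uv)^{3/2}}\le0\quad\text{in }\Omega,
\]
so $z$ is harmonic and $w$ is superharmonic with a non‑positive defect that vanishes to second order on linear configurations (it is identically zero when $u,v$ are multiples of the same linear function). The $\varepsilon$‑flatness hypothesis will force this defect to be negligible at the working scale, so that $w$ is an ``almost one‑phase solution'' with boundary slope $1$, while $z$ is a harmonic companion trapping $w$ from above.

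\emph{Step 1: partial Harnack for the couple $(w,z)$.} I would show that if $u,v$ are $\varepsilon$‑flat in $B_1$ in the direction $e_d$ with $\varepsilon$ small, then in a ball $B_{r_0}$ the flatness of $w$ improves by a fixed factor; equivalently, the normalized defect $\varepsilon^{-1}\!\bigl(w-(x_d)_+\bigr)$ is Hölder equicontinuous up to and across $\partial\Omega$. Away from $\partial\Omega$ this is the interior Harnack inequality for the harmonic function $z$ and for $w$ corrected by the (small, controlled) superharmonicity defect; near a contact point on $\partial\Omega$ one slides the standard radially symmetric one‑phase barriers against $w$ from below and against $z$ from above, reading off the boundary condition from \cref{d:solutions} (or its blow‑up reformulation in \cref{s:the-notion-of-solution}) at the touching point. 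The compatibility $w\le z$ together with $\partial_n w\le\partial_n z$ is exactly what makes the two one‑sided comparisons consistent.

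\emph{Step 2: linearization and transfer back to $u,v$.} Rescaling the flatness defects of $u$ and $v$ by $\varepsilon$ and letting $\varepsilon\to0$, Step~1 gives equicontinuity, and the limits $\tilde u,\tilde v$ solve a linearized system in $B_{\sfrac12}\cap\{x_d>0\}$: the interior corrections are harmonic, and linearizing $\partial_n u\,\partial_n v=1$ around $\alpha\beta=1$ shows that the \emph{sum} of the normal‑derivative corrections vanishes on $\{x_d=0\}$, i.e. $\tilde z:=\tfrac12(\tilde u+\tilde v)$ solves the classical linearized (Neumann) one‑phase problem, while the \emph{difference} $\tilde u-\tilde v$ is governed by a Dirichlet‑type problem, the relevant information coming from the boundary Harnack principle applied to the ratio $u/v$ on the (for $\varepsilon$ small) NTA domain $\Omega$. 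By interior regularity $\tilde z$ is $C^2$ near $0$, hence admits a linear improvement in a smaller ball, which produces the tilted direction $\nu$ with $|\nu-e_d|\le C\varepsilon$; the $C^{0,\gamma}$‑closeness of $u/v$ to the constant $\alpha/\beta$ then separates the new slopes as $\tilde\alpha:=\sqrt{\overline{u/v}}$ and $\tilde\beta:=1/\tilde\alpha$, so that $\tilde\alpha\tilde\beta=1$ holds automatically and $|1-\tilde\alpha|,\ |1-\tilde\beta|\le C\varepsilon$. Transferring the improved flatness of $w$ back to $u$ and $v$ through $u=\tilde\alpha\,w\,(1+o(1))$, $v=\tilde\beta\,w\,(1+o(1))$ and running the usual contradiction‑and‑rescaling argument (as in \cite{velectures}) yields the stated inequalities; iterating this is exactly what proves \cref{t:main}.

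\emph{Main obstacle.} The hard part is Step~1. There is no single function to which the Harnack chain plus boundary‑barrier argument applies --- $u,v$ have free slopes $\alpha,\beta$, and only $\sqrt{uv}$ has the fixed slope $1$ needed to feed the barriers --- so one must: (i) prove that the superharmonicity defect of $w$ is of lower order than $\varepsilon$ at the working scale, using that $v\nabla u-u\nabla v$ is quadratically small in the flatness on nearly linear configurations together with interior gradient estimates for harmonic functions; (ii) build free boundary barriers simultaneously compatible with $w$ from below, with $z$ from above, and with the generalized boundary condition; and (iii) control $u/v$ up to $\partial\Omega$ by a boundary Harnack principle on a domain whose regularity is only being established in the same argument, which requires a careful bootstrap between the improving flatness of $\partial\Omega$ and the non‑degeneracy and Harnack estimates for $u$ and $v$.
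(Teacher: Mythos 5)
Your proposal follows the same skeleton as the paper (the auxiliary pair $\sqrt{uv}$ and $\frac12(u+v)$, their supersolution/subsolution properties, a De Silva--type dichotomy with sliding barriers, and a linearized problem in which the sum satisfies a Neumann condition and the difference a Dirichlet condition), but it has a genuine gap at the decisive step: transferring the improved flatness of the auxiliary functions back to $u$ and $v$ separately. The dichotomy only yields that the flatness of $\sqrt{uv}$ improves from below \emph{or} that of $\frac12(u+v)$ improves from above, and neither inequality controls $u$ and $v$ individually (a lower bound on $\sqrt{uv}$ gives no lower bound on either factor). Your substitute --- writing $u=\tilde\alpha\,w\,(1+o(1))$, $v=\tilde\beta\,w\,(1+o(1))$ and invoking a boundary Harnack principle for the ratio $u/v$ on $\Omega$ viewed as an NTA domain --- is circular: $\eps$-flatness of $\partial\Omega$ at the unit scale does not make $\Omega$ NTA (that would require flatness at all points and scales, i.e.\ essentially the regularity being proved), and the ``careful bootstrap'' you mention is precisely the missing argument. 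The paper resolves this differently: the improved flatness of either auxiliary function traps $\partial\Omega$ between two translates of a half-space with gap at most $(2-c)\eps$ (the inclusions \eqref{e:inclusions}), and then \cref{t:gain_space} compares $u$ (and, separately, $v$) with harmonic replacements in the \emph{fixed} slab-like domain $B_{1/4}\setminus\{x_d\le -a-C\eps\}$; the boundary Harnack inequality is applied only in that fixed, smooth domain, so no a priori regularity of $\Omega$ is needed. This ``gain space for $\Omega$, then improve $u$ and $v$ by comparison in a fixed domain'' lemma is the key ingredient absent from your plan.

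Two further corrections. First, in the linearized problem the Dirichlet condition for $D=\frac12(\widetilde u_\infty-\widetilde v_\infty)$ needs no boundary Harnack for $u/v$: over $\partial\Omega_n$ both $\widetilde u_n$ and $\widetilde v_n$ equal $-x_d/\eps_n$, so by Hausdorff convergence of the graphs the limits coincide on $\{x_d=0\}$ (\cref{l:linearized}, Step 2). Second, your sketch of the partial Harnack omits the normalization that makes the dichotomy run with two different functions: one first replaces $(u,v)$ by $(cu,c^{-1}v)$ so that $\widetilde u(\bar x)=\widetilde v(\bar x)$, deduces $|\widetilde\alpha-1|,|\widetilde\beta-1|\le C\eps$, and proves the AM--GM gap estimate $0\le\frac12(\widetilde u+\widetilde v)-\sqrt{\widetilde u\widetilde v}\le C\eps^2$ near $\bar x$ (\eqref{e:estimate-MA-MG}), which is what allows the Harnack gain of the arithmetic mean to be passed to the geometric mean before sliding the barrier; the quantitative smallness of the superharmonicity defect of $w$ that you propose in (i) is not what is used --- superharmonicity enters only qualitatively in the touching argument. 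Finally, the new slopes are read off from the linearized problem, $\tilde\alpha=1+\eps\,\partial_{x_d}D(0)$ and $\tilde\beta=\tilde\alpha^{-1}$, not from an average of $u/v$.
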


In order to prove \cref{t:improvement_of_flatness-intro}, we use the general strategy of De Silva developed in \cite{desilva} for viscosity solutions of the one-phase problem, which reduces the proof of \cref{t:improvement_of_flatness-intro} to two key ingredients (partial Harnack inequality and analysis of the linearized problem) at which are concentrated the whole difficulty of the proof and the insight on the specific problem. The idea is the following. Arguing by contradiction, one considers a sequence of solutions $(u_n,v_n)$, which are $\eps_n$-flat with $\eps_n\to 0$, and then produces the linearizing sequence
$$\widetilde u_n(x)=\frac{u_n(x)-x_d}{\eps_n}\qquad\text{and}\qquad \widetilde v_n(x)=\frac{v_n(x)-x_d}{\eps_n}.$$
The argument now can be divided in two main steps.

The first step is to show that $\widetilde u_n$ and $\widetilde v_n$ converge (see \cref{s:partial_harnack}) to some functions $u_\infty$ and $v_\infty$; this is done  by proving a {\it partial Harnack inequality}, which is the hardest part of the proof. Roughly speaking, the partial Harnack inequality in our case (see \cref{t:partial_harnack1} for the precise statement) states that if $u$ and $v$ is any couple of $\eps$-flat solutions, for some $\eps<\eps_0$, then there is a constant $c\in(0,1)$ such that
\begin{equation}\label{e:ph1-intro}
(x_d - (1-c)\eps)_+ \le u(x) \le (x_d + \eps)_+ \qquad \text{and} \qquad  (x_d  - (1-c)\eps)_+ \le v(x) \le (x_d + \eps)_+ \quad\text{in}\quad B_{\sfrac12},
\end{equation}
or
\begin{equation}\label{e:ph2-intro}
(x_d - \eps)_+ \le u(x) \le (x_d + (1-c)\eps)_+ \qquad \text{and} \qquad  (x_d  - \eps)_+ \le v(x) \le (x_d + (1-c)\eps)_+ \quad\text{in}\quad B_{\sfrac12},
\end{equation}
that is, the flatness is improved from above or from below, in the same direction $e_d$, but without the scaling factor that would allow to iterate the statement without going above the threshold $\eps_0$.

 The second step is to show that the $u_\infty$ and $v_\infty$ are solutions to a PDE problem (the so-called {\it linearized problem} or  {\it limit problem}), from which one can obtain an oscillation decay for $u_\infty$ and $v_\infty$ that can then be transfered back to $\widetilde u_n$ and $\widetilde v_n$, for some $n$ large enough. In our case, the linearized problem is the following system of PDEs on the half-ball $B_1\cap\{x_d>0\}$ (see \cref{l:linearized}, \cref{s:improvement_of_flatness}):
\begin{equation}\label{e:linearized-pb-intro}
\begin{cases}
\Delta u_\infty=\Delta v_\infty = 0&\text{in}\quad B_1\cap\{x_d>0\},\\
u_\infty=v_\infty\quad\text{and}\quad \partial_{x_d} u_\infty+\partial_{x_d} v_\infty=0&\text{on}\quad B_1\cap\{x_d=0\}.\end{cases}
\end{equation}

In our case, the most challenging part of the proof is the partial Harnack inequality. In fact, in the one-phase and the two-phase problems (see \cite{desilva}, \cite{dfs,dfs2,dfs3}, and \cite{despve}) the validity of \eqref{e:ph1-intro}-\eqref{e:ph2-intro} is obtained by constructing explicit competitors, which are essentially variations of the constructions in \cite{desilva} . In our case though, the functions $u$ and $v$, considered separately, are not solutions (not even sub- or supersolutions) to any free boundary problem. Thus, a key observation in our case, which is insipred by \cite{desilva-tortone} and turns out to be crucial in both the partial Harnack inequality and the proof of \eqref{e:linearized-pb-intro}, is that if $u$ and $v$ are solutions to  \eqref{e:equation-u}-\eqref{e:equation-v}-\eqref{e:boundary_condition}, then $\sqrt{uv}$ and $\frac12(u+v)$ are respectively a viscosity subsolution and viscosity supersolution of the one-phase problem \eqref{e:one-phase} (see \cref{l:viscosity} and \cref{r:superharmonicity})\footnote{This situation is similar to the one of the vectorial problem \eqref{e:vectorial} in which each of the components of $U$ is a viscosity supersolution, while, as it was shown in \cite{mtv}, the modulus $|U|$ satisfies $|\nabla|U||=1$ on $\partial\{|U|>0\}$ and is a viscosity subsolution of \eqref{e:one-phase}; this information was used in \cite{desilva-tortone} to prove a partial Harnack inequality and an $\eps$-regularity theorem for the solutions of \eqref{e:vectorial}. }. Moreover, it is easy to check that both $\sqrt{uv}$ and $\frac12(u+v)$ hereditate the flatness of $u$ and $v$. Thus, by using the competitors from \cite{desilva} on these functions, we obtain the following dichotomy in $B_{\sfrac12}$ :
\begin{center}
\it the flatness $\frac12(u+v)$ is improved from above \rm  \quad or \quad \it the flatness of $\sqrt{uv}$ is improved from below.\rm
\end{center}
Now, notice that we cannot transfer this information back to $u$ and $v$ just be an algebric manipulation; for instance, a bound from below on $\sqrt{uv}$ does not a priori imply a bound from below on both $u$ and $v$. On the other hand, one can easily notice that the improved flatness of $\sqrt{uv}$ or $\frac12(u+v)$, in particular, implies that in $B_{\sfrac12}$ the boundary $\partial\Omega$ is trapped between two nearby translations of a half-space, which are distant at most $(2-c)\eps$. Using this geometric information and a comparison argument based on the boundary Harnack principle, in \cref{t:gain_space}, we obtain that also the flatness of $u$ and $v$ improves in $B_{\sfrac12}$.

\subsection{On the boundary condition $\frac{\partial u}{\partial n}\frac{\partial v}{\partial n}=1$ and its relation to a shape optimization problem}\label{sub:shape-optimization}
Our result applies to a whole class of shape optimization problems, that is, variational problems of the form
$$\min\Big\{J(\Omega)\ :\ \Omega\in\mathcal A\Big\},$$
where $\mathcal A$ is an admissible class of subsets of $\R^d$ and $J$ is a given function on $\mathcal A$. Typically, the admissible set $\mathcal A$ is a family of sets of fixed measure, contained  in a given bounded open set $D\subset\R^d$, while the functional $J$ is monotone with respect to the set inclusion and depends on the resolvent of an elliptic operator with Dirichlet boundary conditions on $\partial\Omega$.
The shape functionals are usually related to models in Engineering, Mechanics and Material Sciences and in most of the cases fall in one of the following main classes: \it  spectral functionals \rm and \it integral functionals \rm (for more details we refer to the books \cite{bubu}, \cite{henrot}, \cite{henrotbook}, \cite{hepi} and the survey paper \cite{bu}).

The spectral functionals are functionals of the form $J(\Omega)=F(\lambda_1(\Omega),\dots,\lambda_k(\Omega))$, where $F:\R^k\to\R$ is a monotone (in each variable) function and $\lambda_1(\Omega),\dots,\lambda_k(\Omega)$ are the eigenvalues of the Dirichlet Laplacian on $\Omega$. The regularity and the local structure of these optimal sets were studied in \cite{bmpv}, \cite{krli, krli2}, \cite{mtv} (see also \cite{brla} and \cite{matrve} for the special cases $J(\Omega)=\lambda_1(\Omega)$ and $J(\Omega)=\lambda_2(\Omega)$) and are related to the vectorial Bernoulli problem from \cite{csy}, \cite{mtv} and  \cite{mtv2}. An $\eps$-regularity theorem for general spectral functionals was obtained in \cite{krli2}.

The integral functionals can be written in the general form
\begin{equation}\label{e:sop}
J(\Omega)=\int_{D}j(u_\Omega,x)\,dx\,,
\end{equation}
where $j:\R\times D\to\R$ is a given function and the state function $u_\Omega$ is the unique solution of
$$-\Delta u=f\quad\text{in}\quad\Omega\ ,\qquad u\in H^1_0(\Omega),$$
the right-hand side $f:D\to\R$ being a fixed measurable function.

As it was observed already in \cite{buve}, a free boundary system  of the form \eqref{e:equation-u}-\eqref{e:equation-v}-\eqref{e:boundary_condition} naturally arises in the computation of the first variation of $J$. This is easy to see if one computes formally the first variation of $J$ for smooth sets. Indeed, if we suppose that $\Omega$ is an optimal set and smooth, and that $\xi\in C^\infty(D;\R^d)$ is a compactly supported vector field, then we can define the family of sets $\Omega_t:=(Id+t\xi)(\Omega)$ and the family of state functions $u_t:=u_{\Omega_t}$. Then, the first variation of $J$ is given by
\begin{align*}
\delta J(\Omega)[\xi]:=\frac{d}{dt}\Big|_{t=0}J(\Omega_t)
&=\frac{d}{dt}\Big|_{t=0}\int_{\Omega_t}j(u_t,x)\,dx\\
&=\frac{d}{dt}\Big|_{t=0}\left[\int_{D}\big(j(u_t,x)-j(0,x)\big)\,dx+\int_{\Omega_t}j(0,x)\,dx\right]\\
&=\int_{\Omega}u'\frac{\partial j}{\partial u}(u_\Omega,x)\,dx+\int_{\partial \Omega} j(0,x)\,\xi\cdot n_\Omega\,d\HH^{d-1},
\end{align*}
where $n_\Omega(x)$ to be the exterior normal at $x\in\partial\Omega$ and the formal derivative $u'$ (of $u_t$ at $t=0$) is the solution of the boundary value problem
$$\Delta u'=0\quad\text{in}\quad \Omega\,,\qquad u'=-\xi\cdot\nabla u_\Omega\quad\text{on}\quad \partial\Omega,$$
in which the condition on $\partial\Omega$ is a consequence of the fact that, given $x\in \partial\Omega$, we have
$$u_t(x+t\xi(x))=0\quad\text{for every}\quad t\in\R.$$
We next define the function
$$g(x):=-\frac{\partial j}{\partial u}(u_\Omega(x),x),$$
and the solution $v_\Omega$ of the problem
$$-\Delta v=g\quad\text{in}\quad\Omega\ ,\qquad v\in H^1_0(\Omega).$$
\begin{remark}
Before we continue with the computation of the first variation, we notice that, in order to have the monotonicity of $J$, it is natural to assume that $f\ge 0$ and $\frac{\partial j}{\partial u}\le 0$, which of course implies that $g\ge 0$ and that both $u_\Omega$ and $v_\Omega$ are non-negative. On the other hand, if $f$ and $\frac{\partial j}{\partial u}$ change sign, then in general an optimal set might not exist (see for instance \cite{buve}).
\end{remark}	
	
In order to complete the computation of $\delta J(\Omega)[\xi]$, we integrate by parts in $\Omega$, obtaining
$$-\int_{\Omega}u'g(x)\,dx=\int_{\Omega}u'\Delta v_\Omega\,dx=-\int_{\Omega}\nabla u'\cdot \nabla v_\Omega\,dx+\int_{\partial\Omega}u'\frac{\partial u_\Omega}{\partial n}=\int_{\partial\Omega}u'\frac{\partial u_\Omega}{\partial n}\ .$$
But now since $\nabla u_\Omega$ is parallel to $n_\Omega$ at the boundary, we have that $u'=-\xi\cdot\nabla u_\Omega=-(\xi\cdot n_\Omega)(n_\Omega\cdot \nabla u_\Omega)$. Thus, the first variation of $J$ is
$$\delta J(\Omega)[\xi]=\int_{\partial\Omega}\Big(-\frac{\partial u_\Omega}{\partial n}\frac{\partial v_\Omega}{\partial n}+j(0,x)\Big)n_\Omega\cdot\xi\ .$$
Since the vector field $\xi$ is arbitrary and since $\Omega$ is a minimizer among the sets of prescribed measure, we get that in a neighborhood $B_r(x_0)$ of a point $x_0$ of the free boundary $\partial\Omega\cap D$, $u_\Omega$ and $v_\Omega$ are solutions of the system
$$
\begin{cases}
-\Delta u=f&\quad\text{in}\quad \Omega\cap B_r(x_0),\\
-\Delta v=g&\quad\text{in}\quad\Omega\cap B_r(x_0),\smallskip\\
\displaystyle{\frac{\partial u}{\partial n}\frac{\partial v}{\partial n}}=c+j(0,x)&\quad\text{on}\quad \partial\Omega \cap B_r(x_0),
\end{cases}
$$
where $c$ is a positive constant.

Our definition \cref{d:solutions} is a generalization of the notion of solution and was proposed by Caffarelli in \cite{caf1,caf2} in the context of a two-phase free boundary problem. In a subsequent paper we will use Theorem \ref{t:main}  to obtain a regularity result for optimal sets for functionals of the form \eqref{e:sop}. We notice that in Theorem \ref{t:main} we do not assume that the functions $u$ and $v$ are minimizers of a functional or solutions of a shape optimization problem of any kind, so this result is of independent interest and can be seen as a one-phase vectorial version of the classical results of Caffarelli \cite{caf1,caf2}.

\section{On the viscosity formulations of solution}\label{s:the-notion-of-solution}
In this section we briefly discuss the boundary condition \eqref{e:boundary_condition}. In particular, in \cref{d:solutions2} we give a more general notion of solution, which we will use throughout the paper. We start by recalling the following definition.
\begin{definition}[One sided tangent balls]\label{d:one-sided-tangent-ball}
Let $\Omega\subset\R^d$ be an open set and let $x_0\in\partial\Omega$. We say that $\Omega$ admits a one-sided tangent ball at $x_0$ if one of the following conditions hold:	
	\begin{enumerate}[(i)]
	\item\label{item:inside} there are $r>0$ and $y_0\in\Omega$ such that
	$\ B_r(y_0)\subset \Omega\quad\text{and}\quad \partial B_r(y_0)\cap \partial \Omega=\{x_0\},\smallskip$
	\item\label{item:outside}  there are $r>0$ and $y_0\in \R^d\setminus \overline\Omega$ such that
	$\ B_r(y_0)\subset  \R^d\setminus \overline\Omega\quad\text{and}\quad \partial B_r(y_0)\cap \partial \Omega=\{x_0\}$.
\end{enumerate}	
Moreover, we will use the notation
\begin{equation}\label{e:def-solutions-unit-vector}
\displaystyle\nu_{x_0,y_0}:= \frac{y_0-x_0}{|y_0-x_0|}\quad\text{in the case \ref{item:inside}},\quad\text{and}\quad  \displaystyle\nu_{x_0,y_0}:=-\frac{y_0-x_0}{|y_0-x_0|}\quad\text{in the case \ref{item:outside}}.
\end{equation}
We notice that when $\Omega$ is regular, the vector $\nu_{x_0,y_0}$ is the inner normal to $\partial\Omega$ at $x_0$, while for non-smooth domains it may depend on the ball $B_r(y_0)$.
\end{definition}

Let $Q:B_1\to\R$ be a $C^{0,\alpha}$-regular function (for some $\alpha>0$) and suppose that:
\begin{equation}
\text{There is}\quad C_Q\ge 1\quad\text{such that}\quad C_Q^{-1}\le Q(x)\le C_Q\quad\text{for all}\quad x\in B_1.
\end{equation}
Then, \cref{d:solutions} can be generalized as follows.
\begin{definition}[Definition of solutions I]\label{d:solutions11}
Given two continuous non-negative functions $u,v:B_1\to\R$ with the same support $\Omega=\{u>0\}=\{v>0\}$,	we say that
	$$	\frac{\partial u}{\partial n}\frac{\partial v}{\partial n}=Q\quad\text{on}\quad \partial\Omega\cap B_1,$$
	if at any point $x_0\in \partial\Omega\cap B_1$, for which $\partial\Omega$ admits a one-sided tangent ball at $x_0$,
	we have that the functions $u$ and $v$ can be expanded as
	\begin{equation}
	u(x)=\alpha \left((x-x_0)\cdot\nu\right)_++\ o\big(|x-x_0|\big),
	\end{equation}
	\begin{equation}
	v(x)=\beta \left((x-x_0)\cdot \nu\right)_++\ o\big(|x-x_0|\big),
	\end{equation}
	where $\nu$ is the unit vector given by \eqref{e:def-solutions-unit-vector} and $\alpha$ and $\beta$ are positive real numbers such that:
	$$\alpha\beta=Q(x_0).$$
\end{definition}
In particular, this definition implies that if $\partial\Omega$ admits a one-sided tangent ball at $x_0\in\partial\Omega$, then this tangent ball is unique, which of course excludes a priori domains with angles and cusps. Moreover, it implies that at such points the blow-ups of $u$ and $v$ are unique. Now, since in many situations this is not a priori known, we will work with couples which are solutions in the more general sense described in the next subsection.

\subsection{A more general notion of solution}\label{sub:solutions}
For every $x_0\in\partial\Omega\cap B_1$ and every $r>0$ small enough, we define
$$u_{r,x_0}(x)=\frac1{r}u(x_0+rx)\qquad\text{and}\qquad v_{r,x_0}(x)=\frac1{r}v(x_0+rx).$$
{Throughout the paper we will also adopt the notation $u_{r}:=u_{r,0}$ and $ v_r:=v_{r,0}$.} Therefore, \cref{d:solutions11} can be expressed in terms of the rescaling $u_{r,x_0}$ and $v_{r,x_0}$ in the following way.
\begin{remark}\label{l:equivalent_def_solutions}
	Let $u,v:B_1\to\R$ be two non-negative continuous functions with the same support $$\Omega:=\{u>0\}=\{v>0\}.$$
	Then, the following are equivalent
	\begin{enumerate}[\rm (1)]
		\item $\ds\frac{\partial u}{\partial n}\frac{\partial v}{\partial n}=Q\,\text{ on }\, \partial\Omega\cap B_1,$
		in the sense of \cref{d:solutions11}.
		\item At any point $x_0\in \partial\Omega\cap B_1$, for which one of the conditions \ref{item:inside} and \ref{item:outside} of \cref{d:one-sided-tangent-ball} hold, we have that $u_{r,x_0}$ and $v_{r,x_0}$ converge uniformy in $B_1$ as $r\to 0$ respectively to the functions
		\begin{equation}
		x\mapsto \alpha \left((x-x_0)\cdot\nu\right)_+\qquad\text{and}\qquad x\mapsto \beta \left((x-x_0)\cdot\nu\right)_+
		\end{equation}
		where $\alpha$ and $\beta$ are positive constants such that $\alpha\beta=Q(x_0)$ and $\nu\in\R^d$ is the unit vector given by \eqref{e:def-solutions-unit-vector}.
	\end{enumerate}
\end{remark}	

In particular, \cref{l:equivalent_def_solutions} implies that \cref{d:solutions11} can be generalized as follows.

\begin{definition}[Definition of solutions II]\label{d:solutions2}
	Given two continuous non-negative functions $u,v:B_1\to\R$ with the same support $\Omega=\{u>0\}=\{v>0\}$,	we say that
	$$	\frac{\partial u}{\partial n}\frac{\partial v}{\partial n}=Q\quad\text{on}\quad \partial\Omega\cap B_1,$$
	if at any point $x_0\in \partial\Omega\cap B_1$, for which one of the conditions \ref{item:inside} and \ref{item:outside} of \cref{d:one-sided-tangent-ball} hold, there exist:
	\begin{itemize}
		\item a decreasing sequence $r_n\to 0$;
		\item two positive constants $\alpha>0$ and $\beta>0$ such that $\alpha\beta=Q(x_0)$;
		\item a unit vector $\nu\in\R^d$;
	\end{itemize}	
	such that the sequences $u_{r_n,x_0}$ and $v_{r_n,x_0}$ converge uniformly in $B_1$ respectively to the functions
	{\begin{equation}
u_0(x):=\alpha \left(x\cdot\nu\right)_+\qquad\text{and}\qquad v_0(x):=\beta \left(x\cdot\nu\right)_+\ .
	\end{equation}}
\end{definition}

\begin{remark}
We will say that $u_0$ and $v_0$ are blow-up limits of $u$ and $v$ at $x_0$. We notice that the blow-up limits at a point may not be unique as they a priori depend on the sequence $r_n\to0$.
\end{remark}

\begin{remark}
	\cref{t:main} holds also for solutions $u$ and $v$ of \eqref{e:equation-u}-\eqref{e:equation-v}-\eqref{e:boundary_condition} in the sense of \cref{d:solutions2}. In fact, the entire proof of this theorem will be given for solutions in the sense of \cref{d:solutions2}.	
\end{remark}	

\begin{remark}
The sequence $r_n\to0$ from \cref{d:solutions2} may depend on the tangent ball $B_r(y_0)$ at $x_0$ (which in turn may not be unique). Thus, in \cref{d:solutions2} we do not assume that the blow-ups of $u$ and $v$  at $x_0$, as well as the the tangent ball $B_r(y_0)$, are unique.  	
\end{remark}

\subsection{Optimality conditions in viscosity sense}\label{sub:viscosity}
{In view of the last notion of solutions, we can finally state the viscosity formulation of the free boundary condition introduced in \cref{d:solutions2}. }
\begin{definition}
Let $u:B_1\to\R$ be a continuous non-negative function, $\varphi\in C^\infty(\R^d)$ be given and
$$\varphi_+(x):=\max\{\varphi(x),0\}.$$
\begin{itemize}
\item We say that $\varphi_+$ touches $u$ from 	below at a point $x_0\in\partial\{u>0\}\cap B_1$ if $u(x_0)=\varphi(x_0)=0$ and
$$\varphi_+(x)\le u(x)\quad\text{for every $x$ in a neighborhood of $x_0$}.$$
\item We say that $\varphi_+$ touches $u$ from 	above at a point $x_0\in\partial\{u>0\}\cap B_1$ if $u(x_0)=\varphi(x_0)=0$ and
$$\varphi_+(x)\ge u(x)\quad\text{for every $x$ in a neighborhood of $x_0$}.$$
\end{itemize}	
\end{definition}

\begin{lemma}\label{l:viscosity}
Suppose that $u$ and $v$ satisfy
$$\frac{\partial u}{\partial n}\frac{\partial v}{\partial n}=1\quad\text{on}\quad \partial\Omega\cap B_1$$
in the sense of \cref{d:solutions2}, where $\Omega=\{u>0\}=\{v>0\}$. Then, the following holds:
\begin{enumerate}[\rm(a)]
\item If $\varphi_+$ touches $\sqrt{uv}$ from below at a point $x_0\in B_1\cap\partial \Omega$, then $|\nabla \varphi(x_0)|\le 1$.
\item If $\varphi_+$ touches $\sqrt{uv}$ from above at a point $x_0\in B_1\cap\partial \Omega$, then $|\nabla \varphi(x_0)|\ge 1$.
\item If $a$ and $b$ are constants such that
$$a>0\,,\quad b>0\quad\text{and}\quad ab=1,$$
and if $\varphi_+$ touches the function $w_{ab}:=\frac12(au+bv)$ from above at $x_0\in B_1\cap\partial \Omega$, then $|\nabla \varphi(x_0)|\ge 1$.
\end{enumerate}	
\end{lemma}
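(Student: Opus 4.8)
The plan is, in all three cases, to convert the one-sided touching by $\varphi_+$ into the existence of a one-sided tangent ball for $\Omega$ at $x_0$, and then to exploit the blow-up description of solutions in \cref{d:solutions2} together with a rescaling of the touching inequality. In each case I may assume $e:=\nabla\varphi(x_0)\neq 0$ and set $\nu:=e/|e|$: in (a) this costs nothing, since otherwise $|\nabla\varphi(x_0)|=0\le 1$; in (b) and (c) the degenerate case $e=0$ should be removed beforehand by a standard perturbation of the test function, and I would not dwell on it here.

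For part (a), I would first observe that $\varphi_+\le\sqrt{uv}$ near $x_0$ forces $\{\varphi>0\}\subseteq\{\sqrt{uv}>0\}=\Omega$ near $x_0$, with $x_0\in\partial\{\varphi>0\}$. Since $\varphi$ is smooth with non-vanishing gradient along $\partial\{\varphi>0\}$ near $x_0$, this level set is a $C^\infty$ hypersurface with inner normal $\nu$ at $x_0$; hence, comparing curvatures, for all small $r>0$ one has $\overline{B_r(x_0+r\nu)}\setminus\{x_0\}\subseteq\{\varphi>0\}\subseteq\Omega$, so that $B_r(x_0+r\nu)\subseteq\Omega$ and $\partial B_r(x_0+r\nu)\cap\partial\Omega=\{x_0\}$, i.e. $\Omega$ admits a one-sided tangent ball at $x_0$ in the sense of \cref{d:one-sided-tangent-ball}\,\ref{item:inside}, with associated direction $\nu$. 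By \cref{d:solutions2} there are then $r_n\downarrow0$ and $\alpha,\beta>0$ with $\alpha\beta=1$ such that $u_{r_n,x_0}\to\alpha(x\cdot\nu)_+$ and $v_{r_n,x_0}\to\beta(x\cdot\nu)_+$ uniformly on $B_1$, whence $\sqrt{u_{r_n,x_0}v_{r_n,x_0}}\to(x\cdot\nu)_+$ uniformly. Rescaling the touching inequality (valid on some $B_\delta(x_0)$) gives $r_n^{-1}\varphi_+(x_0+r_nx)\le\sqrt{u_{r_n,x_0}(x)v_{r_n,x_0}(x)}$ on any fixed ball $B_R$ for $n$ large; since $\varphi\in C^\infty$ and $\varphi(x_0)=0$, the left-hand side converges uniformly to $(e\cdot x)_+=(|e|\,x\cdot\nu)_+$. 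Letting $n\to\infty$ and choosing $x=\nu$ yields $|e|\le1$.

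Part (b) is entirely parallel, with the inclusions reversed: $\varphi_+\ge\sqrt{uv}$ near $x_0$ gives $\Omega=\{\sqrt{uv}>0\}\subseteq\{\varphi>0\}$ near $x_0$, hence $\{\varphi<0\}\subseteq\R^d\setminus\overline\Omega$ near $x_0$ with $x_0\in\partial\{\varphi<0\}$; the same curvature comparison applied to the $C^\infty$ hypersurface $\partial\{\varphi<0\}$ produces an \emph{exterior} tangent ball $B_r(x_0-r\nu)\subseteq\R^d\setminus\overline\Omega$ for $\Omega$ at $x_0$ with direction $\nu$ (\cref{d:one-sided-tangent-ball}\,\ref{item:outside}). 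Invoking \cref{d:solutions2} and rescaling exactly as before, but now passing to the limit in $r_n^{-1}\varphi_+(x_0+r_nx)\ge\sqrt{u_{r_n,x_0}(x)v_{r_n,x_0}(x)}$, I obtain $(|e|\,x\cdot\nu)_+\ge(x\cdot\nu)_+$, hence $|e|\ge1$. For part (c) I would simply note that, since $ab=1$, the AM--GM inequality gives $w_{ab}=\tfrac12(au+bv)\ge\sqrt{(au)(bv)}=\sqrt{uv}$ pointwise in $B_1$; thus, if $\varphi_+$ touches $w_{ab}$ from above at $x_0\in B_1\cap\partial\Omega$, then $\varphi_+\ge w_{ab}\ge\sqrt{uv}$ near $x_0$ and $\varphi_+(x_0)=w_{ab}(x_0)=\sqrt{uv}(x_0)=0$, so $\varphi_+$ also touches $\sqrt{uv}$ from above at $x_0$, and part (b) applies.

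The step I expect to need the most care is the geometric translation used in (a) and (b): extracting from a one-sided contact of $\varphi_+$ with the \emph{non-smooth} composite $\sqrt{uv}$ a one-sided tangent ball for $\Omega$ whose bounding sphere meets $\partial\Omega$ exactly at $x_0$, so that \cref{d:solutions2} becomes applicable; the auxiliary nuisance is the degenerate case $\nabla\varphi(x_0)=0$ in (b)--(c), which has to be dealt with by perturbing the test function.
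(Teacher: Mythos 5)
Your proposal is correct and follows essentially the same route as the paper's proof: the one-sided touching is converted into an interior (resp.\ exterior) tangent ball for $\Omega$ at $x_0$, \cref{d:solutions2} then provides half-plane blow-ups $\alpha(x\cdot\nu)_+,\ \beta(x\cdot\nu)_+$ which are compared with the linear blow-up $(x\cdot\nabla\varphi(x_0))_+$ of $\varphi_+$, and AM--GM handles part (c) (the paper blows up $w_{ab}$ directly and applies AM--GM to the coefficient $\tfrac{a\alpha+b\beta}{2}$, while you reduce (c) to (b) pointwise --- the same idea). The two points you flag are harmless: the a priori mismatch between the blow-up direction of \cref{d:solutions2} and $\nabla\varphi(x_0)/|\nabla\varphi(x_0)|$ is resolved automatically by the limiting half-plane inequality, and the degenerate case $\nabla\varphi(x_0)=0$ is treated no more explicitly in the paper than in your sketch.
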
	
\begin{proof}
We start by proving (c). Suppose that the function $\varphi_+$ touches $w_{ab}$ from above at $x_0\in\partial\Omega$. Then, there is a ball touching $\partial\Omega$ at $x_0$ from outside (in the sense of \cref{d:one-sided-tangent-ball} (ii)). But then, by \cref{d:solutions2}, there are blow-up limits of $u$ and $v$ given respectively by
\begin{equation}\label{e:blow-ups-at-x0}
u_0(x)=\alpha \left(x\cdot\nu\right)_+\qquad\text{and}\qquad v_0(x)=\beta \left(x\cdot\nu\right)_+\ .
\end{equation}
Moreover, since $\varphi$ is smooth, the blow-up of $\varphi_+$ is given by
\begin{equation}\label{e:blow-up-varphi}
\varphi_0(x):=\big(x\cdot \nabla\varphi(x_0)\big)_+\ .
\end{equation}
By hypothesis, we have that $\varphi_0$ touches from above (at zero) the function
$$x\mapsto \frac12\Big(au_0(x)+bv_0(x)\Big)=\frac{a\alpha+b\beta}2 \left(x\cdot\nu\right)_+\ .$$
Now, since $\frac{a\alpha+b\beta}{2}\ge \sqrt{\alpha a\beta b}=1$, we have that $\varphi_0$ touches from above (again in zero) also the function
 $$x\mapsto \left(x\cdot\nu\right)_+\ .$$
 Thus, $\nabla\varphi(x_0)=\nu$ and in particular $|\nabla \varphi(x_0)|\ge 1$.

We next prove (a) and (b). If $\varphi_+$ touches $\sqrt{uv}$ from below (resp. above) at $x_0\in\partial\Omega$, then $\partial\Omega$ has an interior (resp. exterior) tangent ball at $x_0$. In particular, by \cref{d:solutions2}, $u$ and $v$ have blow-ups $u_0$ and $v_0$ given by \eqref{e:blow-ups-at-x0}. But then the function $$\sqrt{u_0v_0}= \left(x\cdot\nu\right)_+$$
is a blow-up limit of $\sqrt{uv}$. Now, using again that the blow-up limit of $\varphi_+$ is given by \eqref{e:blow-up-varphi}, we get the claim.
\end{proof}		
\begin{remark}
We will say that $u_0$ and $v_0$ are blow-up limits of $u$ and $v$ at $x_0$. We notice that the blow-up limits at a point may not be unique as they a priori depend on the sequence $r_n\to0$.
\end{remark}

\section{Statement of the main theorem}\label{s:main-main}
We now give the statement of our main theorem, which is a generalization of \cref{t:main}.
\begin{theorem}\label{t:main-main}	
Given $f,g \in L^\infty(B_1)$ non-negative and $Q \in C^{0,\alpha}(B_1)$, consider $u, v \in C(B_1)$ non-negative function which have the same support in $B_1$ and set
$\Omega:=\{u>0\}=\{v>0\}.$\\
Suppose, moreover, that $u$ and $v$ are solutions of the system
\begin{equation}\label{e:problem}
\begin{cases}
-\Delta u=f&\quad\text{in}\quad \Omega,\\
-\Delta v=g&\quad\text{in}\quad\Omega,\smallskip\\
\displaystyle{\frac{\partial u}{\partial n}\frac{\partial v}{\partial n}}=Q&\quad\text{on}\quad \partial\Omega\cap B_1,
\end{cases}
\end{equation}
where the free boundary condition holds in the sense of \cref{d:solutions2}.\\
Then, there is $\eps>0$ such that if $u$ and $v$ are $\varepsilon$-flat in $B_1$ and
$$\|f\|_{L^\infty(B_1)}+\|g\|_{L^\infty(B_1)}\le \eps^2\qquad \mbox{and}\qquad \|Q(x)-1\|_{L^\infty(B_1)}\leq \eps,$$
then $\partial\Omega$ is $C^{1,\alpha}$ in $B_{\sfrac12}$.
\end{theorem}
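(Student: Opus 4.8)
The plan is to deduce \cref{t:main-main} from the improvement-of-flatness statement \cref{t:improvement_of_flatness} by the standard iteration scheme (see \cite{velectures}). Once one knows that $\eps$-flatness of $(u,v)$ in $B_1$, together with the smallness $\|f\|_{L^\infty}+\|g\|_{L^\infty}\le\eps^2$ and $\|Q-1\|_{L^\infty}\le\eps$, implies $\tfrac{\eps}{2}$-flatness of the rescalings $(u_\rho/\rho,v_\rho/\rho)$ in a fixed smaller ball $B_\rho$ in a direction $\nu$ with $|\nu-e_d|\le C\eps$, one iterates on dyadic scales: the change of direction at the $k$-th step is controlled by $C\,2^{-k}\eps$, so the free-boundary normals form a Cauchy sequence with H\"older modulus and $\partial\Omega\cap B_{1/2}$ is a $C^{1,\alpha}$ graph. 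The hypotheses on $f,g,Q$ are preserved --- indeed improved --- under rescaling, since $\|f_\rho\|_{L^\infty}=\rho\|f(\rho\,\cdot)\|_{L^\infty}$ and $[Q_\rho]_{C^{0,\alpha}}=\rho^{\alpha}[Q]_{C^{0,\alpha}}$, so the iteration never leaves the regime $\eps<\eps_0$.

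The improvement of flatness is proved by compactness and contradiction, following De Silva \cite{desilva}. Assume it fails: there are $\eps_n\to0$ and solutions $(u_n,v_n)$ on $B_1$ with data $f_n,g_n,Q_n$ satisfying the scaled smallness, which are $\eps_n$-flat in the direction $e_d$ with $0\in\partial\Omega_n$, but for which no improvement holds. Consider the linearizing sequences $\widetilde u_n=(u_n-x_d)/\eps_n$ and $\widetilde v_n=(v_n-x_d)/\eps_n$, viewed on $\Omega_n\cap B_{1/2}$ (equivalently on their graphs). The two main steps are: (i) show that, up to a subsequence, $\widetilde u_n$ and $\widetilde v_n$ converge uniformly (in the Hausdorff sense) to H\"older functions $u_\infty,v_\infty$ on $\overline{B_{1/2}\cap\{x_d>0\}}$; and (ii) identify the limit system \eqref{e:linearized-pb-intro} solved by $(u_\infty,v_\infty)$. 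For (i) the crucial input is the partial Harnack inequality \cref{t:partial_harnack1}: by \cref{l:viscosity}, $\sqrt{u_nv_n}$ is a viscosity subsolution and $\tfrac12(u_n+v_n)$ a viscosity supersolution of the one-phase problem \eqref{e:one-phase}, both inheriting the flatness of $(u_n,v_n)$; De Silva's competitor construction applied to these auxiliary functions yields the dichotomy ``the flatness of $\tfrac12(u_n+v_n)$ improves from above'' or ``the flatness of $\sqrt{u_nv_n}$ improves from below''. In either case $\partial\Omega_n\cap B_{1/2}$ is trapped between two translates of $\{x_d=\mathrm{const}\}$ at distance $\le(2-c)\eps_n$, and a boundary-Harnack comparison (\cref{t:gain_space}) transfers this gain to $u_n$ and $v_n$ separately; iterating on finitely many dyadic scales gives the interior oscillation and H\"older bounds needed for the Arzel\`a--Ascoli theorem, and since $\|f_n\|_{L^\infty}/\eps_n\le\eps_n\to0$ the limits $u_\infty,v_\infty$ are harmonic in $B_{1/2}\cap\{x_d>0\}$. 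For (ii), passing to the limit in \cref{l:viscosity} with $Q_n\to1$: the linearization of the subsolution forces $\partial_{x_d}(u_\infty+v_\infty)\ge0$ on $\{x_d=0\}$, that of the supersolution forces the reverse inequality, and the constraint $\{u_n>0\}=\{v_n>0\}$ gives $u_\infty=v_\infty$ on $\{x_d=0\}$, which is exactly \eqref{e:linearized-pb-intro} (\cref{l:linearized}).

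To conclude, classical estimates for the harmonic transmission system \eqref{e:linearized-pb-intro} (obtained via even/odd reflection and Schauder theory) yield an oscillation decay at a small scale $\rho\in(0,1)$: near the origin $u_\infty$ and $v_\infty$ are well approximated by $x\mapsto \ell(x')+c\,x_d$ and $x\mapsto \ell(x')-c\,x_d$ respectively, where $x=(x',x_d)$, $\ell$ is affine, and $c\in\R$. Transferring this back to $\widetilde u_n,\widetilde v_n$ for $n$ large and undoing the linearization produces the improved flatness of $(u_n,v_n)$ in a direction $\nu_n$ with $|\nu_n-e_d|\le C\eps_n$ and multipliers satisfying $\widetilde\alpha_n\widetilde\beta_n=1$ and $|1-\widetilde\alpha_n|+|1-\widetilde\beta_n|\le C\eps_n$, contradicting the choice of $(u_n,v_n)$. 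The main obstacle is the pair of ingredients feeding Step (i): the partial Harnack inequality \cref{t:partial_harnack1} and the ``gain of space'' step \cref{t:gain_space}. Since neither $u$ nor $v$ solves a free boundary problem on its own, De Silva's competitors can only be run on $\sqrt{uv}$ and $\tfrac12(u+v)$, and recovering a genuine two-sided flatness gain for $u$ and $v$ individually requires the boundary Harnack principle on the (NTA-type) domains trapped between nearby half-spaces --- this is the new analytic content of the argument.
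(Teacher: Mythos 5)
Your proposal takes essentially the same route as the paper: reduce \cref{t:main-main} to the improvement-of-flatness result \cref{t:improvement_of_flatness} and prove the latter by De Silva's compactness scheme, with the partial Harnack inequality \cref{t:partial_harnack1} run on the auxiliary functions $\sqrt{uv}$ and $\frac12(u+v)$, the gain-of-space step \cref{t:gain_space} (boundary Harnack) to transfer the trapping of $\partial\Omega$ back to $u$ and $v$ separately, and the linearized problem \cref{l:linearized} (Neumann condition for the sum, Dirichlet for the difference) treated by reflection and interior estimates, exactly as in the paper. The only slip is terminological: with the conventions of \cref{r:superharmonicity} it is $\frac12(u+v)$ that is the subsolution and $\sqrt{uv}$ the supersolution, but since you use the two auxiliary functions precisely as the paper does (flatness of $\sqrt{uv}$ improved from below, of $\frac12(u+v)$ from above), the argument is unaffected.
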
	

\begin{remark}
We notice that the presence of $f$, $g$ and $Q$ entails only some minor technical adjustments of the proof. The main difference with respect to \cref{t:main} is in the fact that the boundary condition is given by \cref{d:solutions2} instead of \cref{d:solutions11}. In fact, what we will use in the proof is not even \cref{d:solutions2}, but the comparison properties listed in \cref{l:viscosity}. 	
\end{remark}	

\begin{remark}
The positivity assumption on $f$ and $g$ is technical and is only required for the estimate on the Laplacian of $\sqrt{uv}$ (see \cref{r:superharmonicity} below). Without this assumption one should know that the functions $u$ and $v$ are comparable on $\Omega$, i.e. that $\sfrac{u}{v}$ is bounded away from zero and infinity.
\end{remark}	

\section{A partial Harnack inequality}\label{s:partial_harnack}

In this Section we prove a Harnack type inequality for solutions to {\eqref{e:problem}} in the spirit of \cite{desilva}. In our case the boundary condition does not allow us to work separately with the solutions $u$ and $v$.  Our strategy consists in tracking the improvement of the auxiliary functions $\frac12(u+v)$ and $\sqrt{uv}$, in order to trap the boundary $\partial\Omega$ between nearby translations of a half-space. Only at this point, by exploiting the gained space, we are able to improve the estimates on the solution $(u,v)$.

\begin{remark}\label{r:superharmonicity}
Our approach relies on the fact that the two auxiliary functions are respectively subsolution and supersolution for the scalar one phase problem \eqref{e:one-phase}. Indeed, if $u$ and $v$ are harmonic in $\Omega$ and satisfy \cref{l:viscosity} (c), then  $w:=\frac12(u+v)$ is a subsolution since
$$\Delta w=0\quad\text{in}\quad\Omega,\qquad |\nabla w|\ge 1\quad\text{on}\quad \partial\Omega.$$
On the other hand, the function $z:=\sqrt{uv}$ is a supersolution:
$$\Delta z\le0\quad\text{in}\quad\Omega,\qquad |\nabla z|= 1\quad\text{on}\quad \partial\Omega.$$
The boundary condition follows again from \cref{l:viscosity}, while the superharmonicity in $\Omega$ follows from the fact that
if $u,v:\Omega\to\R$ are two positive and superharmonic functions on an open set $\Omega$, then
\begin{align*}
\Delta\big(\sqrt{uv}\big)&=\text{\rm div}\left(\frac{u\nabla v+v\nabla u}{2\sqrt{uv}}\right)
=\frac{\big(u\Delta  v+v\Delta u\big)}{2\sqrt{uv}}-\frac{\big|u\nabla v-v\nabla u\big|^2}{4(uv)^{\sfrac32}}\le \frac{\big(u\Delta  v+v\Delta u\big)}{2\sqrt{uv}}\le 0.
\end{align*}	
\end{remark}

We first show that if one is able trap the set $\Omega$ between two nearby translations of a half-space, then one can also improve the flatness estimates on $u$ and $v$. This general principle is formulated in the following lemma.
\begin{lemma}\label{t:gain_space}
Let $\eps>0$ and $\phi \in C(B_1)$ be a non-negative solution of
$$
-\Delta \phi = f\quad\mbox{in }B_1\cap \{\phi>0\},
$$
with $f\in L^\infty(B_1)$. Assume that
\begin{equation}\label{prova}
\gamma(x_d+a)_+ \le \phi(x) \le \gamma(x_d+a+\eps)_+ \quad\text{for all}\quad  x \in B_1,
\end{equation}
with $|a|\leq 1$ and either
\begin{equation}\label{e:inclusions}
\Omega\supset  \Big\{x_d+a+C\eps >0\Big\}\cap B_{\sfrac14}\qquad\text{or}\qquad \Omega\subset  \Big\{x_d+a+(1-C)\eps >0\Big\}\cap B_{\sfrac14},
\end{equation}
for some universal $C\in(0,1)$. Then, there exists $\delta\in (0,1), \rho \in (0,1)$ dimensional constants, such that either
$$
\phi(x)\geq \gamma(x_d+a+\delta\eps)_+ \qquad\mbox{or}\qquad \phi(x)\le \gamma(x_d+a+(1-\delta)\eps)_+,
$$
for every $x \in B_{\rho}$.
\end{lemma}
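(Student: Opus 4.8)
The plan is to treat this as a one-phase free boundary argument for the single function $\phi$, using the geometric hypothesis \eqref{e:inclusions} to convert the flatness of $\phi$ into an honest improvement. The starting observation is that $\phi$ is a non-negative function with $-\Delta\phi = f$ in $B_1\cap\{\phi>0\}$ and, by \eqref{prova}, its positivity set $\Omega$ is squeezed between the translated half-spaces $\{x_d+a\le 0\}$ and $\{x_d+a+\eps>0\}$, while $\phi$ is close to the linear profile $\gamma(x_d+a)_+$. Since $\|f\|_{L^\infty}$ is controlled (it will be $\le\eps^2$ in the application, but for the lemma we just need it small relative to $\eps$ after a suitable normalization — or one may simply absorb it into the barrier), this is precisely the Harnack-type setup of De Silva \cite{desilva}, except that we are handed the extra a priori inclusion \eqref{e:inclusions}. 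First I would reduce to $a=0$ by translating in $e_d$, and normalize $\gamma=1$ by scaling; then $\phi$ satisfies $(x_d)_+\le\phi\le(x_d+\eps)_+$ in $B_1$.

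The core of the argument splits according to which alternative in \eqref{e:inclusions} holds. Suppose first that $\Omega\supset\{x_d+C\eps>0\}\cap B_{1/4}$, i.e. the free boundary is \emph{pushed down} to near $\{x_d=-C\eps\}$ in $B_{1/4}$. Consider the point $\bar x = (0,\dots,0,\tfrac18)$, which lies well inside $\Omega$ at height $\tfrac18$, so $\phi(\bar x)\ge \tfrac18 - \eps \ge \tfrac{1}{16}$ for $\eps$ small. In the annular region where $\phi$ is positive and harmonic up to the $L^\infty$ error from $f$, I would apply the interior Harnack inequality to $\phi$ (or more precisely to $\phi$ minus a quadratic correction absorbing $f$) to propagate this lower bound: since $\phi>0$ on the connected set $\{x_d+C\eps>0\}\cap B_{1/4}$ and $\phi(\bar x)\gtrsim 1$, Harnack gives $\phi(x)\ge c_0$ on, say, $B_{1/8}\cap\{x_d\ge -C\eps/2\}$ for a dimensional $c_0$. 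Now I would compare $\phi$ from below with the barrier $w(x)=x_d + a_0 + \sigma\,(|x-\bar x|^{-(d-2)} - r_0^{-(d-2)})$ — the standard De Silva competitor, harmonic away from $\bar x$ — chosen so that $w\le\phi$ on the relevant part of $\partial B_{\rho}$ and $w_+\le \phi$ near the free boundary (using $(x_d)_+\le\phi$), plus a quadratic term to dominate $f$. The key point is that the gained space (the free boundary being below $\{x_d=-C\eps\}$ rather than only below $\{x_d=0\}$) lets the barrier have slope strictly larger than $1$ along $e_d$ near the origin without violating the lower bound, and comparison ($-\Delta w \le f = -\Delta\phi$ in $\{\phi>0\}$, $w_+\le\phi$ on the boundary of the region) yields $\phi\ge w \ge (x_d+\delta\eps)_+$ in $B_\rho$ for a dimensional $\delta,\rho$. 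The second alternative, $\Omega\subset\{x_d+(1-C)\eps>0\}\cap B_{1/4}$, is dual: now the free boundary is \emph{pulled up} to near $\{x_d=-(1-C)\eps\}$, so $\phi$ vanishes on the relatively large set $\{x_d\le -(1-C)\eps\}\cap B_{1/4}$; using the upper bound $\phi\le(x_d+\eps)_+$ and an interior Harnack estimate to get $\phi(\bar x)\le C_1$, I would build a supersolution barrier $W$ with $W\ge\phi$ on $\partial B_\rho$ and $W_+\ge\phi$ near the free boundary, of slope strictly less than $1$ along $e_d$, to conclude $\phi\le(x_d+(1-\delta)\eps)_+$ in $B_\rho$.

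The main obstacle, as usual in these Harnack-improvement lemmas, is arranging the barrier comparison near the free boundary rather than in the interior: one must verify that the competitor $w$ (resp.\ $W$) genuinely lies below (resp.\ above) $\phi$ on the whole boundary of the comparison domain, including the portion that meets $\{\phi=0\}$, and that the perturbation parameter $\sigma$ can be taken of order a dimensional constant times $\eps$ so that the improvement $\delta\eps$ is linear in $\eps$. Here the inclusions \eqref{e:inclusions} are exactly what makes this work — without them the free boundary could sit anywhere in the $\eps$-strip and no fixed-sign improvement would follow — so the argument is really: (i) interior Harnack to transfer a one-sided bound on $\phi$ at a fixed interior point; (ii) a one-parameter family of radial barriers à la De Silva, with the slope gain paid for by the geometric room in \eqref{e:inclusions}; (iii) the maximum principle (in the form $-\Delta(\phi-w)\ge 0$ or $\le 0$ on $\{\phi>0\}$, handling $f$ by an explicit quadratic) to propagate the comparison down to $B_\rho$. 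I expect the bookkeeping of the $f$-term and the precise choice of the two free constants in the barrier to be the only delicate computations, the rest being a direct adaptation of \cite{desilva}.
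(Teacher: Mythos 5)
Your overall geometry is right (two cases according to \eqref{e:inclusions}, a comparison in the slab determined by the inclusion, a quadratic correction to absorb $f$), but the engine you propose does not run, for two reasons. First, the interior--Harnack step is false as stated: from $\phi\le\gamma(x_d+a+\eps)_+$ you have $\phi\lesssim\gamma\eps$ at height $x_d\approx -a-C\eps/2$, so no uniform lower bound $\phi\ge c_0$ can hold on $B_{1/8}\cap\{x_d\ge -a-C\eps/2\}$; interior Harnack degenerates there, and the correct statement (a lower bound proportional to $x_d+a+C\eps$) is precisely what has to be proved, not an input. Second, and more fundamentally, the De Silva competitor/sliding mechanism you invoke is unavailable here: in this lemma $\phi$ carries \emph{no} free boundary condition (this is the whole point --- the condition $\partial_n u\,\partial_n v=Q$ was already spent, via $\sqrt{uv}$ and $\tfrac12(u+v)$, to produce \eqref{e:inclusions}), so a touching point of a barrier with $|\nabla|>1$ on $\partial\{\phi>0\}$ cannot be excluded. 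The only legitimate comparison is in the fixed slab $D=B_{1/4}\cap\{x_d>-a-C\eps\}$ (contained in $\Omega$ in Case 1), where the available boundary information is merely $\phi\ge 0$ on the flat part and $\phi\ge\gamma(x_d+a)_+$ on the spherical part; with that data alone, a naive barrier of the form you describe only returns $\phi\ge\gamma(x_d+a)_+$, i.e.\ no improvement, because the $O(\gamma\eps)$ deficit on the outer sphere is not yet shown to be harmless near the hyperplane.

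The missing ingredient is exactly the paper's boundary Harnack step. In Case 1 one takes the harmonic replacement $\varphi$ of $w=\phi-\tfrac12(x_d+a+C\eps)_+^2\|f\|_{L^\infty}$ in $D$ (so $\varphi\le\phi$), uses the maximum principle to get $\gamma(x_d+a+C\eps)_+-\varphi\le\tfrac{1+C}{2}\gamma\eps$ in $B_{1/4}$, then majorizes this deficit by the harmonic function $h$ in $D$ vanishing on the hyperplane with value $\tfrac{1+C}{2}\gamma\eps$ on $\partial B_{3/16}$, and applies the boundary Harnack inequality to conclude $h\le C_2\gamma\eps\,(x_d+a+C\eps)_+$ in $B_{1/8}$. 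Thus the error is not of size $\eps$ but of size $\eps$ \emph{relative to the linear profile}, so $\phi\ge(1-C_2\eps)\gamma(x_d+a+C\eps)_+\ge\gamma(x_d+a+\delta\eps)_+$ in a small ball $B_\rho$, with $\delta<C$; Case 2 is symmetric from above. Note also that what is gained is a translation by $\delta\eps$, not a slope: your remark that ``the gained space lets the barrier have slope strictly larger than $1$'' has no role here precisely because no gradient condition on $\phi$ is in force. If you replace your steps (i)--(ii) by this harmonic-replacement-plus-boundary-Harnack (or an equivalent explicit Hopf-type barrier vanishing linearly on $\{x_d=-a-C\eps\}$), your argument becomes the paper's proof.
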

\begin{proof}
Let us proceed by dividing the proof in two cases.\medskip

\noindent{\it Case 1. }$\Omega\supset  \Big\{x_d+a+C\eps >0\Big\}\cap B_{\sfrac14}$. Set $D=B_{1/4}\setminus \{x_d \leq -a-C\eps\}$ and consider the function
$$
\begin{cases}
  \Delta \varphi=0 & \mbox{in } D \\
  \varphi=0 & \mbox{in } B_{1/4}\setminus D \\
  \varphi = w & \mbox{on }\partial B_{1/4}.
\end{cases},
\quad\mbox{with }
w=\phi - \frac{1}{2}(x_d+a+C\eps)_+^2 \|f\|_{L^\infty(B_1)}.
$$
Since $\phi > 0$ in $D$ we have $-\Delta \phi \geq -\|f\|_{L^\infty(B_1)}$ in $D$ and consequently $-\Delta w \geq 0$ in $D$. Then, by applying the maximum principle in $D$, we get $\varphi \leq w \leq \phi$ in $D$. On the other hand, since $\phi \geq 0 = \varphi $ in $B_{1/4}\setminus D$, it follows that $\varphi \leq \phi$ in $B_{1/4}$. Therefore, we claim that
\begin{equation}\label{des}
\varphi \geq \gamma(x_d+a+\delta\eps)_+ \quad\mbox{for all }x \in B_{1/32},
\end{equation}
for some universal $\delta\in (0,1)$, from which the desired inequality follows immediately. By \eqref{prova} we know that
$$
\gamma (x_d + a+C \eps)_+\leq \gamma \left(x_d +a + \frac{1+C}{2}\eps\right)_+ \leq \phi +\frac{1+C}{2}\gamma\eps\quad\mbox{in }\overline{B}_{1/4},
$$
and since $\varphi =\phi$ on $\partial B_{1/4}$, we get
$$
\gamma (x_d + a+C \eps)_+ \leq \varphi +\frac{1+C}{2}\gamma\eps\quad\mbox{on }\partial B_{1/4}.
$$
Therefore, by applying the maximum principle in $D$, we get
$$
\gamma (x_d + a+C \eps)_+ - \varphi \leq \frac{1+C}{2}\gamma\eps\quad\mbox{in } B_{1/4}.
$$
Consider now the function
$$
\begin{cases}
  \Delta h =0 & \mbox{in } D \\
  h=0 & \mbox{in } B_{3/16}\setminus D \\
  h = \frac{1+C}{2}\gamma\eps & \mbox{on }\partial B_{3/16}.
\end{cases}
$$
Clearly, $0\leq h \leq \frac{1+C}{2}\gamma\eps$ and by the maximum principle
\begin{equation}\label{tog}
\gamma (x_d + a+C \eps)_+ - \varphi \leq h \quad\mbox{in } B_{3/16}.
\end{equation}
Now, by the boundary Harnack inequality, if we set $\bar{x}=1/8 e_d$ we get
$$
h(x) \leq C_1 \frac{h(\bar{x})}{(1/8 + a+C \eps)_+} (x_d + a+C \eps)_+ \leq C_2 \gamma\eps (x_d+a+C\eps)_+\quad\mbox{in }B_{1/8},
$$
for some universal constants $C_1, C_2 >0$.
This last inequality, together with \eqref{tog}, leads to
$$
(1-C_2 \eps)\gamma (x_d + a+C \eps)_+ \leq \varphi \quad\mbox{in } B_{1/8}.
$$
On the other hand, since there exists $\delta\in (0,1), \delta<C$ and $\rho <1 /8$ such that
$$
(x_d + a + \delta\eps)_+ \leq (1-C_2 \eps) (x_d + a+C \eps)_+\quad\mbox{for all }x \in B_{\rho},
$$
we obtain the desired claim \eqref{des}.
\medskip

\noindent{\it Case 2. }$\Omega\subset  \Big\{x_d+a+(1-C)\eps >0\Big\}\cap B_{\sfrac14}$. Set $D=B_{1/4}\setminus \{x_d \leq -a-(1-C)\eps\}$ and consider the function
$$
\begin{cases}
  \Delta \varphi=0 & \mbox{in } D \\
  \varphi=0 & \mbox{in } B_{1/4}\setminus D \\
  \varphi = w & \mbox{on }\partial B_{1/4}
\end{cases},
\quad\mbox{with }w=\phi + \frac{1}{2}(x_d+a+(1-C)\eps)_+^2\|f\|_{L^\infty(B_1)}.
$$
Notice that $w>0$ and $-\Delta w \leq 0$ in $D$. Therefore, by maximum principle, since $w = 0$ in $B_{1/4}\setminus D$ we get  that $\varphi \geq w$ in $B_{1/4}$. Therefore, since $w\geq \phi$ in $B_{1/4}$, we have $\varphi \geq \phi$ in $B_{1/4}$. Let us claim that
$$
\varphi \leq \gamma(x_d+a+(1-\delta)\eps)_+ \quad\mbox{for all }x \in B_{1/32},
$$
for some universal $\delta\in (0,1)$, from which the desired inequality follows immediately. By \eqref{prova} we know that
$$
\phi -\frac{1+C}{2}\gamma\eps\leq \gamma \left(x_d +a + \frac{1-C}{2}\eps\right)_+ \leq  \gamma (x_d + a+(1-C) \eps)_+\quad\mbox{in }\overline{B}_{1/4},
$$
and since $\varphi =\phi$ on $\partial B_{1/4}$, we get
$$
\phi - \gamma (x_d + a+(1-C) \eps)_+ \leq \frac{1+C}{2}\gamma\eps \quad\mbox{on }\partial B_{1/4}.
$$
Therefore, by applying the maximum principle in $D$, we get
$$
\phi - \gamma (x_d + a+(1-C) \eps)_+ \leq \frac{1+C}{2}\gamma\eps \quad\mbox{in } B_{1/4}.
$$
Consider now the function
$$
\begin{cases}
  \Delta h =0 & \mbox{in } D \\
  h=0 & \mbox{in } B_{3/16}\setminus D \\
  h = \frac{1+C}{2}\gamma\eps & \mbox{on }\partial B_{3/16}.
\end{cases}
$$
Clearly, $0\leq h \leq \frac{1+C}{2}\gamma\eps$ and by the maximum principle
\begin{equation}\label{tog2}
\phi - \gamma (x_d + a+(1-C) \eps)_+ \leq  h \quad\mbox{in } B_{3/16}.
\end{equation}
Now, by the boundary Harnack inequality, if we set $\bar{x}=1/8 e_d$ we get
$$
h(x) \leq C_1 \frac{h(\bar{x})}{(1/8 + a+(1-C) \eps)_+} (x_d + a+(1-C) \eps)_+ \leq C_2 \gamma\eps (x_d+a+(1-C)\eps)_+\quad\mbox{in }B_{1/8},
$$
for some universal constants $C_1, C_2 >0$.
This last inequality, together with \eqref{tog2}, leads to
$$
\phi \leq (1+C_2)\gamma (x_d + a+(1-C) \eps)_+  \quad\mbox{in } B_{1/8}.
$$
On the other hand, since there exists $\delta\in (0,1), \delta<C$ such that
$$
 (1+C_2)(x_d + a+(1-C) \eps)_+ \leq (x_d + a+(1-\delta) \eps)_+
\quad\mbox{for all }x \in B_{R},
$$
we conclude the proof.
\end{proof}
{Finally, we can prove a partial Harnack inequality for solutions to \eqref{e:problem}.} We notice that this result will be applied to the rescalings $u_{r,x_0}$ and $v_{r,x_0}$ of the solutions $u$ and $v$ at some point $x_0\in \overline\Omega$, where as usual we set $\Omega=\{u>0\}=\{v>0\}$.
\begin{lemma}[Partial Harnack]\label{t:partial_harnack1}
	Given a constant $K>0$, there exists $\eps_0,\rho>0$ such that the following holds. If $u$ and $v$ are solutions of {\eqref{e:problem}} in the sense of \cref{d:solutions2} and are such that $0\in\overline\Omega$ and
	\begin{align*}
		\alpha(x_d+a)_+ \le u(x) \le \alpha(x_d+b)_+ \quad\text{for all}\quad  x \in B_1,\\ \beta(x_d+a)_+ \le v(x) \le \beta(x_d+b)_+\quad\text{for all}\quad  x \in B_1,
	\end{align*}
	for some $\alpha$ and $\beta$ satisfying
	  $$0<\alpha\le K,\quad 0<\beta\le K\quad\text{and}\quad \alpha\beta=1,$$
and	for some $a$ and $b$ such that
  {$$|a|< \frac1{10},\quad |b|< \frac{1}{10}\quad\text{and}\quad b-a\le \eps_0,$$}
  then there are $\tilde{a},\tilde{b}$ satisfying $\tilde{b}-\tilde{a} \le (1-\delta)(b-a)$ for some $\delta > 0$ such that
	\[
	\begin{aligned}
	\alpha(x_d+\tilde{a})_+ \le u(x) \le \alpha(x_d+\tilde{b})_+ \\ \beta(x_d+\tilde{a})_+ \le v(x) \le \beta(x_d+\tilde{b})_+
	\end{aligned},
	\]
	hold for all $x \in B_{\rho}$, with $\rho<1/8$ a dimensional constant.
\end{lemma}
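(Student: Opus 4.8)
The plan is to reduce the statement to the scalar one--phase problem by passing to the auxiliary functions $\tfrac12(u+v)$ and $\sqrt{uv}$, to run De~Silva's partial Harnack argument \cite{desilva} on those two functions, and finally to transfer the gained room back to $u$ and $v$ through \cref{t:gain_space}. For transparency I describe the scheme when $f\equiv g\equiv0$ and $Q\equiv1$; the presence of $f,g,Q$ only introduces the quadratic corrections already visible in \cref{t:gain_space}.

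\emph{Step 1: normalisation and the auxiliary functions.} Set $\bar u:=u/\alpha$ and $\bar v:=v/\beta$. Since $\alpha\beta=1$ and $0<\alpha,\beta\le K$ we have $\alpha,\beta\in[K^{-1},K]$, and both $\bar u$ and $\bar v$ lie in the strip $(x_d+a)_+\le\bar u,\bar v\le(x_d+b)_+$ in $B_1$. Put
$$z:=\sqrt{uv}=\sqrt{\bar u\,\bar v},\qquad w:=\tfrac12(\bar u+\bar v)=\tfrac12\big(\tfrac1\alpha u+\tfrac1\beta v\big),$$
so that $\{z>0\}=\{w>0\}=\Omega$ and, by AM--GM, $(x_d+a)_+\le z\le w\le(x_d+b)_+$ in $B_1$. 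By \cref{l:viscosity}\,(a) together with \cref{r:superharmonicity}, $z$ is a viscosity \emph{supersolution} of the one--phase problem \eqref{e:one-phase}; by \cref{l:viscosity}\,(c) applied with the admissible weights $\tfrac1\alpha,\tfrac1\beta$ (whose product is $1$), together with \cref{r:superharmonicity}, $w$ is a viscosity \emph{subsolution} of \eqref{e:one-phase}. Thus $z$ and $w$ are precisely the functions to which De~Silva's barrier constructions apply.

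\emph{Step 2: the dichotomy.} Fix the interior base point $\bar x:=\tfrac15 e_d$; since $|a|,|b|<\tfrac1{10}$ one has $B_{1/20}(\bar x)\subset\Omega$ and $z(\bar x),w(\bar x)\in[\bar x_d+a,\bar x_d+b]\subset[\tfrac1{10},\tfrac3{10}]$. The quantitative heart of the proof is that $z$ and $w$ are \emph{quadratically} close at $\bar x$:
$$0\le w(\bar x)-z(\bar x)=\tfrac12\big(\sqrt{\bar u(\bar x)}-\sqrt{\bar v(\bar x)}\big)^2\le\frac{|\bar u(\bar x)-\bar v(\bar x)|^2}{8\,(\bar x_d+a)}\le C\,(b-a)^2,$$
because $|\bar u(\bar x)-\bar v(\bar x)|\le b-a$. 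Now test the dichotomy at $\bar x$. If $w(\bar x)\le\bar x_d+\tfrac{a+b}2$, then, $w$ being a subsolution, the De~Silva partial Harnack \cite{desilva} improves its flatness \emph{from above}: $w\le\big(x_d+b-c(b-a)\big)_+$ in a ball $B_\sigma$ centred at $0$, with $c,\sigma$ dimensional, hence $\Omega\cap B_\sigma\subset\{x_d+b-c(b-a)>0\}$, the second inclusion in \eqref{e:inclusions}. If instead $w(\bar x)>\bar x_d+\tfrac{a+b}2$, then $z(\bar x)\ge w(\bar x)-C(b-a)^2>\bar x_d+a+\tfrac12(b-a)-C(b-a)^2\ge\bar x_d+a+\tfrac14(b-a)$ once $\eps_0$ is small; then, $z$ being a supersolution, its flatness improves \emph{from below}: $z\ge\big(x_d+a+c(b-a)\big)_+$ in $B_\sigma$, hence $\Omega\supset\{x_d+a+c(b-a)>0\}\cap B_\sigma$, the first inclusion in \eqref{e:inclusions}. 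In every case $\Omega$ is trapped, at scale $\sigma$, between two half--spaces at distance $(1-c)(b-a)$.

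\emph{Step 3: back to $u$ and $v$, and the main obstacle.} In both cases $\bar u$ and $\bar v$ satisfy \eqref{prova} with $\gamma=1$ and the same $a$, while $\Omega$ satisfies one of the inclusions in \eqref{e:inclusions}; after rescaling to the scale at which \cref{t:gain_space} is stated (this is where $|a|<\tfrac1{10}$ is used) we apply \cref{t:gain_space} \emph{separately} to $\bar u$ and to $\bar v$. Crucially, the active inclusion in \eqref{e:inclusions} depends only on $\Omega$, hence is the same for both functions, so \cref{t:gain_space} improves them \emph{in the same direction}: either $\bar u,\bar v\ge(x_d+a+\delta(b-a))_+$, or $\bar u,\bar v\le(x_d+a+(1-\delta)(b-a))_+$, on a dimensional ball $B_\rho$ with $\rho<\tfrac18$. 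Multiplying back by $\alpha$ and $\beta$ and setting $(\tilde a,\tilde b)=(a+\delta(b-a),b)$ in the first case, $(\tilde a,\tilde b)=(a,b-\delta(b-a))$ in the second, gives $\tilde b-\tilde a=(1-\delta)(b-a)$ and the desired inclusions for $u$ and $v$ in $B_\rho$. The genuine difficulty throughout is that $u$ and $v$, taken separately, are neither sub-- nor supersolutions of any free boundary problem, so the improvement cannot be obtained by algebra on $u,v$; the argument closes only because $\big(\tfrac12(u+v),\sqrt{uv}\big)$ is a sub/supersolution pair for \eqref{e:one-phase} and these two functions are $O\big((b-a)^2\big)$--close at interior points, which is exactly what makes the two alternatives of Step~2 \emph{exhaustive}. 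The remaining technical burden is the bookkeeping of the several radii (De~Silva's improvement radius $\sigma$, the working radius and the output radius in \cref{t:gain_space}, the target $\rho<\tfrac18$) and of the $f,g,Q$--error terms, which must stay controlled by $(b-a)^2$ — this is the reason for the hypotheses $\|f\|_{L^\infty}+\|g\|_{L^\infty}\le\eps^2$ and $\|Q-1\|_{L^\infty}\le\eps$ in \cref{t:main-main}.
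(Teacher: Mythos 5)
Your overall architecture is the same as the paper's: pass to the pair $w=\tfrac12(\bar u+\bar v)$, $z=\sqrt{uv}$ (subsolution/supersolution of \eqref{e:one-phase} via \cref{l:viscosity} and \cref{r:superharmonicity}), note that AM and GM are $O\big((b-a)^2\big)$-close where the functions are bounded below, run a dichotomy at $\bar x=\tfrac15 e_d$ with De Silva's sliding barriers to trap $\Omega$ between two half-planes at distance $(1-c)(b-a)$, and then apply \cref{t:gain_space} separately to $u$ and $v$, the direction of improvement being the same for both because it is dictated by the inclusion satisfied by $\Omega$. Your normalization ($\bar u=u/\alpha$, $\bar v=v/\beta$) differs from the paper's (which rescales so that $\widetilde u(\bar x)=\widetilde v(\bar x)$), but it serves the same purpose and even gives $|\bar u-\bar v|\le b-a$ on all of $B_1$ directly, so this is a harmless variant.

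There is, however, one genuine gap, precisely at the pivot of the supersolution case. When $w(\bar x)>\bar x_d+\tfrac{a+b}{2}$ you transfer the gain to $z$ only at the single point $\bar x$ and then assert that, ``$z$ being a supersolution, its flatness improves from below'' by citing \cite{desilva} as a black box. De Silva's improvement-from-below has two steps: first the interior Harnack inequality propagates the pointwise gain at $\bar x$ to all of $B_{\sfrac{1}{20}}(\bar x)$, and only then the barrier $\psi_t$ is slid underneath. The first step needs two-sided control of the Laplacian in $B_{\sfrac1{10}}(\bar x)$, which $z=\sqrt{uv}$ does not obviously have: it is merely superharmonic, its Laplacian containing the term $-\big|u\nabla v-v\nabla u\big|^2/\big(4(uv)^{\sfrac32}\big)$, and a nonnegative superharmonic function trapped between $(x_d+a)_+$ and $(x_d+b)_+$ can exceed $x_d+a$ by $\tfrac14(b-a)$ at the centre of a ball while being much closer to $x_d+a$ on most of that ball; so ``supersolution plus pointwise gain at $\bar x$'' does not yield the ball-wise gain needed to start the sliding argument, and no such one-sided Harnack statement is available in \cite{desilva}. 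The paper's proof avoids this by applying the (two-sided) Harnack inequality to $h=\tfrac12(\widetilde u+\widetilde v)-(x_d+a)$, which solves an elliptic equation with right-hand side of size $\eps^2$, obtaining $h\ge C_{\mathcal H}\eps$ on $B_{\sfrac1{20}}(\bar x)$, and only then transferring this ball-wise gain to $\sqrt{uv}$ through the AM--GM estimate \eqref{e:estimate-MA-MG}, which holds on the whole ball, before sliding the barrier under $\sqrt{uv}$. Your own computation already gives the AM--GM gap $\le C(b-a)^2$ at every point of $B_{\sfrac1{20}}(\bar x)$ (since $|\bar u-\bar v|\le b-a$ in $B_1$ and $x_d+a\ge\tfrac1{20}$ there), so the repair is to run Harnack on $w-(x_d+a)$ and transfer on the ball rather than at $\bar x$; with that correction your argument coincides with the paper's.
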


\begin{proof}
As in \cite{desilva}, we fix a point $\bar{x}:= e_d/5$ and we consider the function $w: \R^d \to \R$ defined as
	\[
	w(x) := \begin{cases}1 & \text{if $x \in B_{\sfrac{1}{20}}(\bar{x})$}, \\ 0 & \text{if $x \notin B_{\sfrac{3}{4}}(\bar{x})$}, \\ \bar{c}\Big(|x-\bar{x}|^{-d} -(\sfrac{3}{4})^{-d}\Big) & \text{if $x \in B_{\sfrac{3}{4}}(\bar{x}) \diff \overline{B}_{\sfrac{1}{20}}(\bar{x})$},\end{cases}
	\]
	where $\bar{c}:=20^d-(\sfrac{4}{3})^d$. Notice that the function $w$ is nonzero exactly on $B_{\sfrac{3}{4}}(\bar{x})$ and it satisfies the following properties on $B_{\sfrac{3}{4}}(\bar{x}) \diff \overline{B}_{\sfrac{1}{20}}(\bar{x})$:
	\begin{enumerate}[label=(w-\roman*)]
		\item the function is subharmonic since $\Delta w (x)= 2d \bar{c} |x-\bar{x}|^{-d-2} \ge 2d \bar{c} (\sfrac{3}{4})^{-d-2} > 0$;
		\item $\partial_{x_d} w$ is striclty positive on the half-space $\{x_d<\sfrac{1}{10}\}$.
	\end{enumerate}

\smallskip Let us proceed by dividing the proof in three steps.\\

\noindent{\bf Step 1. Invariant transformation and flatness estimates.} Now, we consider two functions $u$ and $v$ satisfying the hypotheses of the lemma. First we notice that $$\frac1{10K}\le u(\bar x)\le 10K\qquad\text{and}\qquad\frac1{10K}\le v(\bar x)\le 10K.$$
Thus, there is a constant
$$\frac1{100K}\le c\le 100K$$
such that the couple $\widetilde u=cu$, $\widetilde v=c^{-1}v$ satisfies
$$\widetilde u(\bar x)=\widetilde v(\bar x),$$
 is also a solution to \eqref{e:equation-u}-\eqref{e:equation-v}-\eqref{e:boundary_condition}  and
\begin{equation}\label{e:flatness-tilde-u-tilde-v}
\begin{cases}
\begin{array}{ll}
\widetilde\alpha(x_d+a)_+ \le \widetilde u(x) \le \widetilde\alpha(x_d+b)_+& \quad\text{for all}\quad  x \in B_1,\\
\widetilde\beta(x_d+a)_+ \le \widetilde v(x) \le \widetilde\beta(x_d+b)_+&\quad\text{for all}\quad  x \in B_1,
\end{array}
\end{cases}
\end{equation}
where
{$$\widetilde \alpha:=c\alpha,\quad\widetilde \beta:=c^{-1}\beta\qquad\text{and}\qquad\widetilde{\alpha}\widetilde{\beta}=1\,.$$}
We define the positive constants $\delta$ and $\eps$ as follows. We set
$$\eps:=b-a<\eps_0,$$
and, without loss of generality, we assume
$$\widetilde \alpha:=1+\delta\ge 1\ge\frac1{1+\delta}=\widetilde \beta.$$
Next, we notice that since $\widetilde u(\bar x)=\widetilde v(\bar x)$, by \eqref{e:flatness-tilde-u-tilde-v}, we have
\begin{align*}
\widetilde\alpha\Big(\frac15+a\Big)\le \widetilde\beta\Big(\frac15+b\Big),
\end{align*}
and thus,
$$1+\delta=\widetilde\alpha\le \frac{\sfrac15+b}{\sfrac15+a} \widetilde\beta\le \frac{\sfrac15+b}{\sfrac15+a}= 1+\frac{b-a}{\sfrac15+a}\le 1+10\eps.$$
In particular, this implies that $\delta \leq 10\eps$ and
$$1\ge \widetilde\beta=\frac1{1+\delta}\ge 1-\delta\ge 1-10\eps.$$
Finally, we obtain
\begin{equation}\label{e:estimate-mean-alpha-beta}
1\le \frac{\widetilde\alpha+\widetilde\beta}{2}= \frac12\left(1+\delta+\frac{1}{1+\delta}\right)\le 1+\delta^2\le 1+100\eps^2.
\end{equation}
This, together with \eqref{e:flatness-tilde-u-tilde-v}, implies that for all $x\in B_1$ we have
\begin{equation}\label{e:estimate-means-tilde-u-tilde-v}
\begin{cases}
\begin{array}{rcl}
(x_d+a)_+ \le &\sqrt{\widetilde u(x)\widetilde v(x)}& \le (x_d+b)_+\smallskip\\
\ds(x_d+a)_+ \le &\ds\frac12\big(\widetilde u(x)+\widetilde v(x)\big) &\le (1+100\eps^2)(x_d+b)_+,
\end{array}
\end{cases}
\end{equation}
with
$$\widetilde u(\bar x)=\widetilde v(\bar x)=\sqrt{\widetilde u(\bar x)\widetilde v(\bar x)}=\frac{\widetilde u(\bar x)+\widetilde v(\bar x)}{2}\,.$$
Now, using again \eqref{e:flatness-tilde-u-tilde-v} and choosing $\eps_0$ such that $\delta\le 10\eps\le10\eps_0\le \frac12$, we have
{$$|\widetilde u-\widetilde v|\le 2\eps\quad\text{on}\quad B_1.$$}
Moreover, since $a,b\le 1/10$,
{$$1\ge\widetilde u\ge \frac1{40}\quad\text{on}\quad B_{\sfrac{1}{20}}(\bar x).$$}
This implies that we can choose $\eps_0$ small enough such that on $B_{\sfrac{1}{20}}(\bar x)$ we have
\begin{equation}\label{e:estimate-MA-MG}
0\le \frac{\widetilde u+\widetilde v}{2}-\sqrt{\widetilde u\widetilde v}= \widetilde u\left(1+\frac12\frac{\widetilde v-\widetilde u}{\widetilde u}\right)-\widetilde u\,\sqrt{1+\frac{\widetilde v-\widetilde u}{\widetilde u}}\le C\eps^2,
\end{equation}
where $C$ is a numerical constant.\\

\noindent{\bf Step 2. Gaining space for the domain $\Omega$.} We now reason as in \cite{desilva} and \cite{desilva-tortone} by considering two cases:\medskip

\noindent{\it Case 1.} $\ds\frac{\widetilde u(\bar x)+\widetilde v(\bar x)}{2}\ge  \frac{\eps}{2}+(\bar x_d+a)_+$. Since $|a|< 1/10$, we have $B_{1/10}(\bar x)\subset \{x_d +a >0\}$ and so the function $$h(x):=\frac{\widetilde u(x)+\widetilde v(x)}{2}-(\bar x_d+a)_+$$
is non-negative and {solves a uniformly elliptic equation} in $B_{\sfrac{1}{10}}(\bar x)$ {with right-hand side bounded from above and below by $\eps^2$}. Therefore, since $h(\bar x)\ge \frac\eps2$, the Harnack inequality gives that
$$h\ge C_{\mathcal H}\eps\quad\text{in}\quad B_{\sfrac{1}{20}}(\bar x),$$
where $C_{\mathcal H}$ is a (small) positive constant depending only on the dimension $d$. Now, using \eqref{e:estimate-MA-MG} and choosing $\eps_0$ small enough (depending on the dimension), we get that
$$\sqrt{\widetilde u\widetilde v}-(x_d+a)\ge \frac12C_{\mathcal H}\eps\quad\text{in}\quad B_{\sfrac{1}{20}}(\bar x)\,.$$
Now consider the family of functions
{$$ \psi_t(x):= x_d+a+ \frac12 C_{\mathcal{H}}\eps (w(x)-1)+\frac12 C_{\mathcal{H}}\eps t , $$}
defined for $t \ge 0$ and $x \in B_1$. So far we proved that
\begin{equation}\label{eq.sc3}
\sqrt{\widetilde u(x)\widetilde v(x)}>  \left(\psi_t(x)\right)_+ \quad \text{for every}\quad x \in B_{\sfrac{1}{20}}(\bar{x}) \quad\text{and every}\quad  t  < 1 .
\end{equation}
We will show that the same inequality holds for every $x \in B_1$. Notice that the family of functions $\psi_t$ satisfies, as a consequence of (w-i) and (w-ii) respectively, the following properties:
\begin{enumerate}[label=($\psi$-\roman*)]
	\item {$\Delta \psi_t \geq C\eps > 0$} on $B_{\sfrac{3}{4}}(\bar{x}) \diff \overline{B}_{\sfrac{1}{20}}(\bar{x})$;
	\item $|\nabla \psi_t|(x) > 1$ on $\left(B_{\sfrac{3}{4}}(\bar{x}) \diff \overline{B}_{\sfrac{1}{20}}(\bar{x})\right) \cap \{x_d < \sfrac{1}{10}\}$.
\end{enumerate}
We argue by contradiction. Suppose that for some $t < 1$ there exists $y \in B_1$ such that $\psi_t$ touches $\sqrt{\widetilde u\widetilde v}$ from below at $y$. By \cref{r:superharmonicity} and   ($\psi$-i), we have that $y \notin \Om \cap \left(B_{\sfrac{3}{4}}(\bar{x}) \diff \overline{B}_{\sfrac{1}{20}}(\bar{x})\right)$. On the other hand, by  ($\psi$-ii) and \cref{l:viscosity} we have that $y \notin \partial \Om \cap \left(B_{\sfrac{3}{4}}(\bar{x}) \diff \overline{B}_{\sfrac{1}{20}}(\bar{x})\right)$, which is a contradiction. Thus,
\begin{equation}\label{e:clean-up-from-below-MG}
\sqrt{\widetilde u(x)\widetilde v(x)}>  \left(x_d+a+ \frac12 C_\HH\eps w(x)\right)_+ \quad \text{for every}\quad  x \in B_1,
\end{equation}
and in particular,
\begin{equation}\label{e:clean-up-from-below-Omega}
\Omega\supset  \big\{x\in B_1\ :\ x_d+a+ \frac12 C_\HH\eps w(x)>0\big\}.
\end{equation}

\noindent{\it Case 2.}  $\ds\frac{\widetilde u(\bar x)+\widetilde v(\bar x)}{2}\le  \frac{\eps}{2}+(\bar x_d+a)_+$, which is equivalent to
$$\ds\frac{\eps}{2}\le  \bar x_d+b- \frac{\widetilde u(\bar x)+\widetilde v(\bar x)}{2}.$$
Using the above estimate, \eqref{e:estimate-means-tilde-u-tilde-v} and the Harnack inequality in $B_{\sfrac{1}{10}}(\bar x)$ we get that
$$(1+100\eps^2)(x_d+b)-\frac{\widetilde u+\widetilde v}{2}\ge C_{\mathcal H}\eps\quad\text{in}\quad B_{\sfrac{1}{20}}(\bar x)\,.$$
Now consider the family of functions
$$ \eta_t(x):= (1+100\eps^2)(x_d+b)-  C_{\mathcal{H}}\eps (w(x)-1)-C_{\mathcal{H}}\eps t , $$
defined for $t \ge 0$ and $x \in B_1$. Then
\begin{equation}\label{eq.sc3}
\frac{\widetilde u(x)+\widetilde v(x)}2>  \left(\eta_t(x)\right)_+ \quad \text{for every}\quad x \in B_{\sfrac{1}{20}}(\bar{x}) \quad\text{and every}\quad  t  < 1.
\end{equation}
{Let us prove that the same inequality holds for every $x \in B_1$}. Notice that, for every $t>0$, we have:
\begin{enumerate}[label=($\eta$-\roman*)]
	\item {$\Delta \eta_t < -C\eps$} on $B_{\sfrac{3}{4}}(\bar{x}) \diff \overline{B}_{\sfrac{1}{20}}(\bar{x})$;
	\item $|\nabla \eta_t|(x) < 1$ on $\left(B_{\sfrac{3}{4}}(\bar{x}) \diff \overline{B}_{\sfrac{1}{20}}(\bar{x})\right) \cap \{x_d < \sfrac{1}{10}\}$.
\end{enumerate}
As in the previous case, we argue by contradiction. Suppose that, for some $t <  1$, there exists $z \in B_1$ such that $\eta_t$ touches from above $\frac12(\widetilde u+\widetilde v)$ at $z$. By ($\eta$-i), we have that $z \notin \Om \cap \left(B_{\sfrac{3}{4}}(\bar{x}) \diff \overline{B}_{\sfrac{1}{20}}(\bar{x})\right)$. On the other hand, by  ($\eta$-ii) and \cref{l:viscosity} we have that $z \notin \partial \Om \cap \left(B_{\sfrac{3}{4}}(\bar{x}) \diff \overline{B}_{\sfrac{1}{20}}(\bar{x})\right)$, which is a contradiction. Thus,
\begin{equation}\label{e:clean-up-from-above-AG}
\frac{\widetilde u(x)+\widetilde v(x)}2\le \Big((1+100\eps^2)(x_d+b)- C_\HH\eps w(x)\Big)_+ \quad \text{for every}\quad  x \in B_1,
\end{equation}
and in particular,
{$$
\Omega\subset  \big\{x\in B_1\ :\ (1+100\eps^2)(x_d+b)- C_\HH\eps w(x)>0\big\}.
$$}
Finally, choosing $\eps_0$ small enough and using that $w$ is bounded away from zero in $B_{\sfrac{1}{4}}$, we get
\begin{equation}\label{e:clean-up-from-above-Omega-final}
\Omega\subset  \Big\{x\in B_{\sfrac14}\ :\ x_d+b- \frac12C_\HH\eps w(x)>0\Big\}\,.\vspace{0.3cm}
\end{equation}

\noindent{\bf Step 3. Conclusion.} As a consequence of Step 2, we have one of the two inclusions \eqref{e:clean-up-from-below-Omega} and \eqref{e:clean-up-from-above-Omega-final}. More precisely, there is a constant $C>0$ such that either
\begin{equation}\label{e:inclusions}
\Omega\supset  \Big\{x_d+a+C\eps >0\Big\}\cap B_{\sfrac14}\qquad\text{or}\qquad \Omega\subset  \Big\{x_d+b-C\eps >0\Big\}\cap B_{\sfrac14}.
\end{equation}
Then, by applying Lemma \ref{t:gain_space} to both $u$ and $v$ (by replacing $\gamma$ respectively with $\alpha$ and $\beta$), there exists a universal constant $\delta\in (0,1)$ such that either
$$
\begin{cases}
u(x) \le \alpha(x_d+b-\delta(b-a))_+ \\ v(x) \le \beta(x_d+b-\delta(b-a))_+
\end{cases}
\quad\mbox{or}\quad
\begin{cases}
u(x) \geq \alpha(x_d+a+\delta(b-a))_+ \\ v(x) \geq \beta(x_d+a+\delta(b-a))_+
\end{cases},
$$
for every $x \in B_{\rho}$, with $\rho < 1/8$ a dimensional constant.
\end{proof}		
{The following corollary is a consequence of the result above.}
\begin{corollary}\label{c:convergence}
Let $(u_n,v_n)$ be a sequence of solutions to {\eqref{e:problem}} in the sense of \cref{d:solutions2}.
Let
$$\Omega_n:=\{u_n>0\}=\{v_n>0\}$$
and suppose that $0\in\partial\Omega_n$, for every $n\in\N$.
Suppose that there is a sequence $\eps_n\to0$ such that for every $x\in B _1$
\begin{align*}
(x_d-\eps_n)_+ \le u_n(x) \le (x_d+\eps_n)_+\qquad\text{and}\qquad (x_d-\eps_n)_+ \le v_n(x) \le (x_d+\eps_n)_+\ .
\end{align*}
Then, there are continuous functions
$$\widetilde u_\infty:\overline B_1^+\to\R\qquad\text{and}\qquad \widetilde v_\infty:\overline B_1^+\to\R,$$
{with $B_1^+ = B_1\cap \{x_d>0\}$}, such that the following holds:
\begin{enumerate}[\rm (a)]
\item The graphs over {$\overline{\Omega}_n$} of
$$\widetilde u_n=\frac{u_n-x_d}{\eps_n}\qquad\text{and}\qquad \widetilde v_n=\frac{v_n-x_d}{\eps_n},$$
Hausdorff converge respectively to the graphs of $u_\infty$ and $v_\infty$ over {$\overline B_1^+$}.
\item The graph over {$\overline{\Omega}_n$}  of
 $$\widetilde w_n=\frac{\sqrt{u_nv_n}-x_d}{\eps_n},$$
Hausdorff converges to the graph of $\frac12\big(u_\infty+v_\infty\big)$ over {$\overline B_1^+$}.
\end{enumerate}
\end{corollary}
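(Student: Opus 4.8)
The plan is the by-now-classical De~Silva compactness scheme, with \cref{t:partial_harnack1} as the only nontrivial ingredient. Two features of that lemma get used: the flatness coefficients here are both equal to $1$, so the lemma applies with $K=1$; and the improved window $[\tilde a,\tilde b]$ it produces is the \emph{same} for $u_n$ and for $v_n$ and is nested in the previous one, which is what lets one pass to the limit in $u_n$ and $v_n$ at once. After passing to a subsequence (not relabeled) I will obtain $u_\infty,v_\infty$ as uniform limits of $\widetilde u_n,\widetilde v_n$, and then identify the limit of $\widetilde w_n$ by an arithmetic--geometric mean computation.

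\emph{Step 1: iterating the partial Harnack inequality.} Fix $\sigma\in(0,1)$ and take $n$ so large that $\eps_n$ is below the threshold of \cref{t:partial_harnack1} (and small enough that the rescaled lower order terms stay admissible). For a free boundary point $x_0\in\partial\Omega_n\cap\overline B_\sigma$, the inclusions $\{x_d>\eps_n\}\cap B_1\subset\Omega_n\subset\{x_d>-\eps_n\}\cap B_1$ force $|(x_0)_d|<\eps_n$, so the rescaling $u_{n,r_0,x_0}$ (with $r_0=1-\sigma$) is a solution of the system which is $O(\eps_n)$-flat in $B_1$ with parameters of size $<1/10$ and with $0$ in the closure of its positivity set; the same holds for $v_{n,r_0,x_0}$. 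Applying \cref{t:partial_harnack1} repeatedly, I get $\delta\in(0,1)$ and $\rho\in(0,\tfrac18)$ (dimensional) such that for every such $x_0$ and every $k\ge0$ with $r_0\rho^k\gtrsim\eps_n$ there are $a^k_{x_0}\le b^k_{x_0}$, with $[a^k_{x_0},b^k_{x_0}]$ nested in $[-\eps_n,\eps_n]$ and $b^k_{x_0}-a^k_{x_0}\le C(1-\delta)^k\eps_n$, for which
\[
(x_d+a^k_{x_0})_+\ \le\ \frac{u_n(x_0+r_0\rho^k x)}{r_0\rho^k}\ \le\ (x_d+b^k_{x_0})_+ \qquad(x\in B_1),
\]
and likewise with $v_n$ replacing $u_n$. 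The one point that needs care is that the \emph{normalised} flatness $C(1-\delta)^k\eps_n(r_0\rho^k)^{-1}$ stays below the threshold along the iteration; since $\rho<1-\delta$ and $\eps_n$ is small this is automatic, and the chain runs for $k\lesssim\log(1/\eps_n)$ steps, i.e.\ down to scale $\sim\eps_n$. I expect this bookkeeping (together with the routine admissibility check on the rescaled data) to be the only real obstacle; everything afterwards is soft.

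\emph{Step 2: uniform modulus and compactness.} From Step 1, from interior elliptic estimates for $u_n-x_d$ in balls that stay inside $\{x_d>\eps_n\}$, and from the two inclusions above, one obtains a dimensional $\gamma:=\log(1-\delta)/\log\rho\in(0,1)$ and $C>0$ with
\[
\osc_{B_r(x_0)\cap\overline\Omega_n}\widetilde u_n\ \le\ Cr^{\gamma}\qquad\text{whenever }x_0\in\overline\Omega_n\cap\overline B_\sigma,\ \ \eps_n\le r\le 1-\sigma,
\]
the same estimate for $\widetilde v_n$, and --- since $\sqrt{u_nv_n}$ lies in the same windows $[(x_d+a^k_{x_0})_+,(x_d+b^k_{x_0})_+]$ --- for $\widetilde w_n$ as well; at scales $r<\eps_n$ the oscillation is moreover $\le C\eps_n^{\gamma}\to0$. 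As $|\widetilde u_n|,|\widetilde v_n|,|\widetilde w_n|\le1$ on $\overline\Omega_n$ and $\overline\Omega_n\to\overline B_1^+$ in the local Hausdorff distance, the graphs of $\widetilde u_n,\widetilde v_n,\widetilde w_n$ over $\overline\Omega_n$ are bounded and equicontinuous uniformly in $n$; an Arzel\`a--Ascoli argument for graphs over varying domains then gives, along a subsequence, Hausdorff convergence of the three graphs to the graphs of functions $u_\infty,v_\infty,\zeta\in C(\overline B_1^+)$ (genuine graphs because the equicontinuity excludes vertical segments as $\eps_n\to0$, and defined on all of $\overline B_1^+$ because $\{x_d>\eps_n\}\cap B_1\subset\Omega_n$). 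This is~(a).

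\emph{Step 3: identification of $\zeta$.} On $\{x_d\ge\tau\}\cap B_1$, for $n$ with $\eps_n<\tau/2$, one has $u_n,v_n\ge x_d-\eps_n\ge\tau/2>0$ and $|u_n-v_n|\le2\eps_n$, hence by $\tfrac12(A+B)-\sqrt{AB}=\tfrac{(A-B)^2}{2(\sqrt A+\sqrt B)^2}$,
\[
0\ \le\ \Big|\,\widetilde w_n-\tfrac12(\widetilde u_n+\widetilde v_n)\,\Big|\ =\ \frac1{\eps_n}\Big|\sqrt{u_nv_n}-\tfrac12(u_n+v_n)\Big|\ \le\ \frac{\eps_n}{2(x_d-\eps_n)}\ \le\ \frac{\eps_n}{\tau}\ \longrightarrow\ 0
\]
as $n\to\infty$, uniformly there. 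Therefore $\zeta=\tfrac12(u_\infty+v_\infty)$ on $\{x_d\ge\tau\}\cap B_1$ for every $\tau>0$, hence on $B_1^+$, and by continuity of $u_\infty,v_\infty,\zeta$ up to $\{x_d=0\}$ also on $\overline B_1^+$, which is~(b).
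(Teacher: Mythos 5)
Your argument is correct and follows essentially the same route as the paper: part (a) is the standard De Silva compactness scheme built on iterating \cref{t:partial_harnack1} (which the paper simply cites), and part (b) uses exactly the paper's two observations, namely that $\sqrt{u_nv_n}$ is trapped in the same flatness windows as $u_n,v_n$ (giving compactness of the graphs of $\widetilde w_n$ up to $\{x_d=0\}$) and that on $\{x_d\ge\tau\}$ one has $\widetilde w_n=\tfrac12(\widetilde u_n+\widetilde v_n)+O(\eps_n/\tau)$, your AM--GM identity being the exact form of the paper's expansion of $\sqrt{(x_d+\eps_n\widetilde u_n)(x_d+\eps_n\widetilde v_n)}$. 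The only cosmetic caveat is the phrase that keeping the rescaled flatness below threshold is ``automatic'': as you yourself note, it holds only for $k\lesssim\log(1/\eps_n)$ iterations, which is precisely the standard bookkeeping and suffices for the equicontinuity at scales above $\sim\eps_n$.
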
	
\begin{proof}
The proof of claim (a) follows from \cref{t:partial_harnack1} precisely as in \cite{desilva}. In order to prove (b), we first notice that for any fixed $\delta>0$, the sequences $\widetilde u_n$ and $\widetilde v_n$ converge uniformly on $B_1\cap \{x_d>\delta\}$ respectively to the functions $\widetilde u_\infty$ and $\widetilde v_\infty$.
In particular, this implies that
\begin{align*}
\frac{\sqrt{u_nv_n}-x_d}{\eps_n}&=\frac{\sqrt{(x_d+\eps_n\widetilde u_n)(x_d+\eps_n\widetilde v_n)}-x_d}{\eps_n}={\frac12\left(\widetilde u_n+\widetilde v_n\right)}+o(\eps_n),
\end{align*}
which proves the claim on every $B_1\cap \{x_d>\delta\}$. Now, in order to have the convergence of the graphs over the whole $B_1^+$, we notice that by \eqref{e:estimate-means-tilde-u-tilde-v} the oscillation of $\sqrt{u_nv_n}-x_d$ decays when passing from $B_1$ to a smaller ball $B_\rho$. Using again the argument from \cite{desilva}, we get that the graphs of
 $$\widetilde w_n=\frac{\sqrt{u_nv_n}-x_d}{\eps_n},$$
 Hausdorff converge to the graph of a H\"older continuous function
 $$\widetilde w_\infty:\overline B_1^+\to\R.$$
 Now, since $\widetilde w_\infty={\frac12\left(\widetilde u_\infty+\widetilde v_\infty\right)}$ on each set $B_1\cap \{x_d>\delta\}$, we get that $\widetilde w_\infty={\frac12\left(\widetilde u_\infty+\widetilde v_\infty\right)}$ on $\overline B_1^+$.
\end{proof}	

\section{Improvement of flatness}\label{s:improvement_of_flatness}
{In this section we prove our main Theorem, from which the $C^{1,\alpha}$ regularity of a flat free boundary follows by standard arguments (see for example \cite{desilva-tortone} for the vectorial case). In view of the invariance of \eqref{e:problem} under suitable multiplication (see Step 1 of the proof of Lemma \ref{t:partial_harnack1}) the flatness conditions of \cref{d:flatness} can be expressed with $\alpha=\beta=1$.}

\begin{theorem}\label{t:improvement_of_flatness}
Let $(u,v)$ be solutions to {\eqref{e:problem}} in the sense of \cref{d:solutions2}.
Let
$$\Omega:=\{u>0\}=\{v>0\}$$
and suppose that $0\in\partial\Omega$. There are constants $\eps_0>0$ and $C>0$ such that the following holds. If $(u,v)$ is a couple of solutions satisfying
	$$ (x_d - \eps)_+ \le u(x) \le (x_d + \eps)_+ \qquad \text{and} \qquad  (x_d  - \eps)_+ \le v(x) \le (x_d + \eps)_+ \qquad\text{in } B_1,$$
for some $\eps < \eps_0$, then there are a unit vector $\nu \in \R^d$ with $|\nu - e_d| \le C \eps$ and a radius $\rho\in(0,1)$ such that
	$$ \tilde{\alpha}(x \cdot \nu - \sfrac{\eps}2)_+ \le u_\rho(x) \le \tilde{\alpha}(x \cdot \nu + \sfrac{\eps}2)_+ \quad \text{and} \quad\tilde{\beta}(x \cdot \nu - \sfrac{\eps}2)_+ \le v_\rho(x) \le \tilde{\beta}(x \cdot \nu + \sfrac{\eps}2)_+  $$
	for all $x \in B_1$, where $\tilde{\alpha}$ and $\tilde{\beta}$ are positive constants that satisfy
	$$\widetilde\alpha\widetilde\beta=1\ ,\qquad |1 - \tilde{\alpha}|\le C\eps\qquad\text{and}\qquad |1-\tilde{\beta}| \le C\eps.$$
\end{theorem}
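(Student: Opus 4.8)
The plan is to run De Silva's compactness/linearization scheme from \cite{desilva}, exploiting that the couple $(u,v)$ is held together by the one-phase sub/supersolutions $\tfrac12(u+v)$ and $\sqrt{uv}$. Arguing by contradiction, and writing $x=(x',x_d)$, suppose the statement fails for every admissible choice of the dimensional constants: there are solutions $(u_n,v_n)$ of \eqref{e:problem} with $\|f_n\|_{L^\infty(B_1)}+\|g_n\|_{L^\infty(B_1)}\le\eps_n^2$ and $\|Q_n-1\|_{L^\infty(B_1)}\le\eps_n$, $\eps_n$-flat in the direction $e_d$ with $\eps_n\to0$ and $0\in\partial\Omega_n$, but for which no pair $(\nu,\rho)$ realizes the conclusion. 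Set $\widetilde u_n:=(u_n-x_d)/\eps_n$ and $\widetilde v_n:=(v_n-x_d)/\eps_n$ on $\overline\Omega_n\cap B_1$; the flatness gives $|\widetilde u_n|,|\widetilde v_n|\le 1$ there, and \cref{c:convergence} (a consequence of the partial Harnack inequality \cref{t:partial_harnack1}) yields, along a subsequence, Hausdorff convergence of the graphs of $\widetilde u_n,\widetilde v_n$ over $\overline\Omega_n$ to the graphs over $\overline{B_1^+}$ of H\"older continuous functions $\widetilde u_\infty,\widetilde v_\infty$, with the graph of $(\sqrt{u_nv_n}-x_d)/\eps_n$ converging to the graph of $\tfrac12(\widetilde u_\infty+\widetilde v_\infty)$.

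Next I would identify $(\widetilde u_\infty,\widetilde v_\infty)$ as a solution of the linearized system \eqref{e:linearized-pb-intro}. Harmonicity of $\widetilde u_\infty,\widetilde v_\infty$ in $B_1^+$ is immediate, since $-\Delta\widetilde u_n=f_n/\eps_n$ and $-\Delta\widetilde v_n=g_n/\eps_n$ vanish in $L^\infty_{\loc}(B_1^+)$, so $\widetilde u_n\to\widetilde u_\infty$, $\widetilde v_n\to\widetilde v_\infty$ locally uniformly to harmonic limits. The coupling $\widetilde u_\infty=\widetilde v_\infty$ on $B_1\cap\{x_d=0\}$ comes from $u_n$ and $v_n$ vanishing on the \emph{same} free boundary $\partial\Omega_n\subset\{|x_d|<\eps_n\}$: at $x\in\partial\Omega_n$ one has $\widetilde u_n(x)=\widetilde v_n(x)=-x_d/\eps_n$, so the traces of the two limit graphs coincide where $\partial\Omega_n$ collapses onto $\{x_d=0\}$. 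The Neumann condition $\partial_{x_d}\widetilde u_\infty+\partial_{x_d}\widetilde v_\infty=0$ is the key new point: by \cref{r:superharmonicity}, $w_n:=\tfrac12(u_n+v_n)$ and $z_n:=\sqrt{u_nv_n}$ are respectively an $\eps_n$-flat viscosity subsolution and supersolution of the one-phase problem \eqref{e:one-phase} (interior right-hand side $O(\eps_n^2)$, free boundary condition from \cref{l:viscosity}), so the standard one-phase linearization of \cite{desilva} forces $\lim_n(w_n-x_d)/\eps_n$ and $\lim_n(z_n-x_d)/\eps_n$ to satisfy \emph{opposite} one-sided Neumann conditions on $\{x_d=0\}$; since by \cref{c:convergence} both equal $\tfrac12(\widetilde u_\infty+\widetilde v_\infty)$, this function has zero Neumann data, which is exactly the stated condition. (The data $f_n,g_n,Q_n$ enter only as lower order perturbations thanks to the assumed smallness.)

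Then I would extract the decay. Near $0$, $\widetilde u_\infty+\widetilde v_\infty$ extends by even reflection and $\widetilde u_\infty-\widetilde v_\infty$ by odd reflection to functions harmonic on a full ball, hence $\widetilde u_\infty,\widetilde v_\infty$ extend analytically across $\{x_d=0\}$ near $0$; using $\widetilde u_\infty(0)=\widetilde v_\infty(0)=0$ (from $0\in\partial\Omega_n$ and graph convergence) and $\|\widetilde u_\infty\|_{L^\infty}+\|\widetilde v_\infty\|_{L^\infty}\le 2$, interior estimates give, for dimensional $C_0,r_0$ and with $\xi':=\nabla_{x'}\widetilde u_\infty(0)$, $c:=\partial_{x_d}\widetilde u_\infty(0)$ (so $|\xi'|+|c|\le C(d)$),
$$\big|\widetilde u_\infty(x)-(\xi'\cdot x'+c\,x_d)\big|\le C_0|x|^2,\qquad \big|\widetilde v_\infty(x)-(\xi'\cdot x'-c\,x_d)\big|\le C_0|x|^2\qquad\text{in }B_{r_0},$$
the tangential gradients coinciding because $\widetilde u_\infty=\widetilde v_\infty$ on $\{x_d=0\}$, the normal ones being opposite by the Neumann condition. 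Fixing a small dimensional $\rho<r_0$, from $u_n=x_d+\eps_n\widetilde u_n$ on $\Omega_n$ and the convergence of the graphs and of $\partial\Omega_n$ to $\{x_d=0\}$, one obtains for $x\in B_1$
$$\Big|\frac{u_n(\rho x)}{\rho}-\widetilde\alpha_n(x\cdot\nu_n)\Big|\le C_0\eps_n\rho+\omega_n\eps_n,\qquad \Big|\frac{v_n(\rho x)}{\rho}-\widetilde\beta_n(x\cdot\nu_n)\Big|\le C_0\eps_n\rho+\omega_n\eps_n,$$
with $\omega_n\to0$, $\nu_n:=(e_d+\eps_n(\xi',0))/|e_d+\eps_n(\xi',0)|$, $\widetilde\alpha_n:=e^{\eps_n c}$, $\widetilde\beta_n:=e^{-\eps_n c}$ (so $\widetilde\alpha_n\widetilde\beta_n=1$, the $O(\eps_n^2)$ discrepancy being absorbed in the error). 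Choosing $\rho$ so that $C_0\rho\le\tfrac14$ and then $n$ so that $\omega_n\le\tfrac14$, and comparing supports, we get that $u_{n,\rho}$ and $v_{n,\rho}$ are $\tfrac{\eps_n}{2}$-flat in the direction $\nu_n$ with $|\nu_n-e_d|\le C\eps_n$ and $|1-\widetilde\alpha_n|+|1-\widetilde\beta_n|\le C\eps_n$ — contradicting the choice of $(u_n,v_n)$.

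The main obstacle is the second step, and within it the Neumann condition: $u$ and $v$ are not individually sub- or supersolutions of any free boundary problem, so they cannot be linearized directly; the argument rests on \cref{l:viscosity}/\cref{r:superharmonicity} and, crucially, on the fact — available only through the partial Harnack inequality, via \cref{c:convergence} — that the linearizations of the two \emph{different} auxiliary functions $\tfrac12(u+v)$ and $\sqrt{uv}$ converge to the \emph{same} limit. Everything, in turn, rests on the partial Harnack inequality \cref{t:partial_harnack1}, which provides the compactness and is the most delicate ingredient of the whole proof.
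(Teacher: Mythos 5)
Your proposal is correct and follows essentially the same route as the paper: a contradiction/compactness argument via the partial Harnack inequality (\cref{c:convergence}), identification of the linearized problem by coupling the Dirichlet condition $\widetilde u_\infty=\widetilde v_\infty$ with the Neumann condition obtained from the viscosity sub/supersolution properties of $\frac12(u+v)$ and $\sqrt{uv}$ (whose linearizations coincide by \cref{c:convergence}(b), exactly as in Steps 3--4 of \cref{l:linearized}), then reflection estimates and transfer back to $(u_n,v_n)$. The only differences are cosmetic (e.g.\ $\tilde\alpha=e^{\eps_n c}$ instead of $1+c\eps_n$ with $\tilde\beta=\tilde\alpha^{-1}$), not a different method.
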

We postpone the construction of the limiting problem arising from the linearization to \cref{l:linearized} and \cref{l:second-order-estimates} and we directly prove the improvement of flatness result.
\begin{proof}[Proof of Theorem \ref{t:improvement_of_flatness}]
We argue by contradiction. Let $(u_n,v_n)$ be a sequence of solutions such that
$$ (x_d - \eps_n)_+ \le u_n(x) \le (x_d + \eps_n)_+ \quad \text{and} \quad  (x_d  - \eps_n)_+ \le v_n(x) \le (x_d + \eps_n)_+ ,$$
where $\eps_n$ is an infinitesimal sequence. Let $\Omega_n:=\{u_n>0\}=\{v_n>0\}$ and consider
\begin{equation}\label{e:wildetilde-u-n-v-n}
\widetilde u_n=\frac{u_n-x_d}{\eps_n}\qquad\text{and}\qquad \widetilde v_n=\frac{v_n-x_d}{\eps_n}{\quad\mbox{on }\overline{\Omega}_n.}
\end{equation}
By the compactness result of \cref{c:convergence}, we get that $(\widetilde u_n,\widetilde v_n)$ converge, up to a subsequence, to a couple of continuous functions
\begin{equation}\label{e:wildetilde-u-v-infty}
\widetilde u_\infty: B_1\cap \{x_d\ge 0\}\to\R\qquad\text{and}\qquad \widetilde v_\infty:B_1\cap \{x_d\ge 0\}\to\R.
\end{equation}
{By \cref{l:linearized}, we have that
$$
M:=\frac12\left({\widetilde u_\infty+\widetilde v_\infty}\right)\qquad\text{and}\qquad D:=\frac12\left({\widetilde u_\infty-\widetilde v_\infty}\right)
$$
are classic solutions to
$$
\begin{cases}
\Delta M=0\quad\text{in}\quad B_1\cap\{x_d>0\},\\
\partial_{x_d} M=0\quad\text{on}\quad B_1\cap\{x_d=0\}.\end{cases}\qquad\text{and}\qquad \begin{cases}
\Delta D=0\quad\text{in}\quad B_1\cap\{x_d>0\},\\
D=0\quad\text{on}\quad B_1\cap\{x_d=0\}.\end{cases}
$$
Therefore, by the regularity result \cref{l:second-order-estimates}, we get}
$$\big|\widetilde u_\infty(x)-x\cdot \nabla \widetilde u_\infty(0)\big|\le {C_d}\rho^2\qquad\text{and}\qquad
\big|\widetilde v_\infty(x)-x\cdot \nabla \widetilde v_\infty(0)\big|\le {C_d}\rho^2\ ,$$
for every $x\in B_\rho\cap \{x_d\ge 0\}$. Now, we can write this as
$$
\begin{cases}x\cdot \nabla \widetilde u_\infty(0)-{C_d}\rho^2\le \widetilde u_\infty(x)\le x\cdot \nabla \widetilde u_\infty(0)+ {C_d}\rho^2\medskip\\
x\cdot \nabla \widetilde v_\infty(0)-{C_d}\rho^2\le \widetilde v_\infty(x)\le x\cdot \nabla \widetilde v_\infty(0)+ {C_d}\rho^2
\end{cases}\qquad\text{for every}\qquad x\in B_\rho\cap \{x_d\ge 0\}.$$
Now, this implies that for $n$ large enough
$$
\begin{cases}x\cdot \nabla \widetilde u_\infty(0)-2{C_d}\rho^2\le \widetilde u_n(x)\le x\cdot \nabla \widetilde u_\infty(0)+ 2{C_d}\rho^2\medskip\\
x\cdot \nabla \widetilde v_\infty(0)-2{C_d}\rho^2\le \widetilde v_n(x)\le x\cdot \nabla \widetilde v_\infty(0)+2{C_d}\rho^2
\end{cases}\qquad\text{for every}\qquad x\in B_\rho\cap \overline\Omega_n,$$
which by the definition of $\widetilde u_n$ and $\widetilde v_n$ can be written as
$$x\cdot \Big(e_d+\eps_n\nabla \widetilde u_\infty(0)\Big)-\eps_n2{C_d}\rho\le  (u_n)_\rho(x)\le x\cdot \Big(e_d+\eps_n\nabla \widetilde u_\infty(0)\Big)+\eps_n2{C_d}\rho\ ,$$
$$x\cdot \Big(e_d+\eps_n\nabla \widetilde v_\infty(0)\Big)-\eps_n2{C_d}\rho\le  (v_n)_\rho(x)\le x\cdot \Big(e_d+\eps_n\nabla \widetilde v_\infty(0)\Big)+\eps_n2{C_d}\rho\ ,$$
for every $x\in B_1\cap \big(\frac1\rho\Omega_n\big)$. We next set
$$V:=\nabla M(0)\qquad\text{and}\qquad c:=\frac{\partial D}{\partial x_d}(0).$$
Thus,
$$x\cdot \Big(e_d+\eps_nV+c\eps_n e_d\Big)-\eps_n2{C_d}\rho\le  (u_n)_\rho(x)\le x\cdot \Big(e_d+\eps_nV+c\eps_n e_d\Big)+\eps_n2{C_d}\rho\ ,$$
$$x\cdot \Big(e_d+\eps_nV-c\eps_n e_d\Big)-\eps_n2{C_d}\rho\le  (v_n)_\rho(x)\le x\cdot \Big(e_d+\eps_nV-c\eps_n e_d\Big)+\eps_n2{C_d}\rho\ .$$
Now, since {by Lemma \ref{l:linearized}} $V$ and $e_d$ are orthogonal, we can compute
$$|e_d(1\pm c\eps_n)+\eps_nV|=\sqrt{1\pm 2c\eps_n+\eps_n^2(c^2+|V|^2)}=1\pm c\eps_n+O(\eps_n^2).$$
Then, fixing $\rho>0$ small enough and taking $\eps_n$ sufficiently small with respect to $\rho$, we get
$$x\cdot \frac{e_d+\eps_nV}{|e_d+\eps_nV|}-\frac12\eps_n\le  \frac{1}{1+c\eps_n}(u_n)_\rho(x)\le x\cdot \frac{e_d+\eps_nV}{|e_d+\eps_nV|}+\frac12\eps_n\  ,$$
$$x\cdot \frac{e_d+\eps_nV}{|e_d+\eps_nV|}-\frac12\eps_n\le  {(1+c\eps_n)}(v_n)_\rho(x)\le x\cdot \frac{e_d+\eps_nV}{|e_d+\eps_nV|}+\frac12\eps_n\ ,$$
for every $x\in B_1\cap \big(\frac1\rho\Omega_n\big)$. { Finally, the contradiction follows by taking
$$
\nu =\frac{e_d+\eps_nV}{|e_d+\eps_nV|}, \quad \tilde{\alpha}=1+c \eps_n\quad\mbox{and}\quad \tilde{\beta} =\tilde{\alpha}^{-1}.
$$}
\end{proof}	
{Under the notations of the proof of Theorem \ref{t:improvement_of_flatness}, we introduce the limiting problem arising from the linearization near flat free boundary points.}
\begin{lemma}[The linearized problem]\label{l:linearized}
Let $\ \widetilde u_\infty$ and $\widetilde v_\infty$ be as in \eqref{e:wildetilde-u-v-infty} and set
{$$M:=\frac12\left({\widetilde u_\infty+\widetilde v_\infty}\right)\qquad\text{and}\qquad D:=\frac12\left({\widetilde u_\infty-\widetilde v_\infty}\right).$$}
Then, $M$ and $D$ are classical solutions of
\begin{equation}\label{e:lineariz}
\begin{cases}
\Delta M=0\quad\text{in}\quad B_1\cap\{x_d>0\},\\
\partial_{x_d} M=0\quad\text{on}\quad B_1\cap\{x_d=0\}.\end{cases}\qquad\text{and}\qquad \begin{cases}
\Delta D=0\quad\text{in}\quad B_1\cap\{x_d>0\},\\
D=0\quad\text{on}\quad B_1\cap\{x_d=0\}.\end{cases}
\end{equation}
\end{lemma}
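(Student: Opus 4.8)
The plan is to establish \eqref{e:lineariz} in three steps — the interior equations, the Dirichlet condition for $D$, and the Neumann condition for $M$ — using only the compactness statement \cref{c:convergence} and the sub/supersolution properties collected in \cref{r:superharmonicity}. For the \emph{interior harmonicity}, note first that for every $\delta\in(0,1)$ and every $n$ large enough, $\eps_n$-flatness gives $u_n(x)\ge (x_d-\eps_n)_+>0$ and $v_n(x)>0$ on $B_1\cap\{x_d>\delta\}$, hence there $-\Delta u_n=f_n$ and $-\Delta v_n=g_n$ with $\|f_n\|_{L^\infty}+\|g_n\|_{L^\infty}\le\eps_n^2$. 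Since $\Delta x_d=0$, the rescaled functions satisfy $-\Delta\widetilde u_n=f_n/\eps_n$ and $-\Delta\widetilde v_n=g_n/\eps_n$ with right-hand sides of size $O(\eps_n)\to0$; because by \cref{c:convergence} they converge locally uniformly in $B_1^+$ to $\widetilde u_\infty,\widetilde v_\infty$, elliptic stability gives $\Delta\widetilde u_\infty=\Delta\widetilde v_\infty=0$ in $B_1^+$, so $\Delta M=\Delta D=0$ in $B_1\cap\{x_d>0\}$.

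For the \emph{Dirichlet condition $D=0$ on $\{x_d=0\}$}, the key observation is that on the free boundary the two linearizing functions agree: if $x_0\in\partial\Omega_n$, then, $\Omega_n=\{u_n>0\}=\{v_n>0\}$ being open, $u_n(x_0)=v_n(x_0)=0$, so $\widetilde u_n(x_0)=-(x_0)_d/\eps_n=\widetilde v_n(x_0)$, i.e. $\widetilde u_n-\widetilde v_n\equiv0$ on $\partial\Omega_n$. On the other hand flatness yields $\{x_d>\eps_n\}\cap B_1\subset\Omega_n\subset\{x_d>-\eps_n\}$, so $\partial\Omega_n\cap B_1\subset\{|x_d|\le\eps_n\}$, and every point $(x',0)$ with $|x'|<1$ is a limit of free boundary points (take e.g. $x_n=(x',t_n)$ with $t_n:=\sup\{t\le\eps_n:\,u_n(x',t)=0\}$, which belongs to $\partial\Omega_n$ and has $|t_n|\le\eps_n$). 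Passing to the limit along the Hausdorff convergence of graphs in \cref{c:convergence}, $0=\widetilde u_n(x_n)-\widetilde v_n(x_n)\to\widetilde u_\infty(x',0)-\widetilde v_\infty(x',0)$, hence $D=0$ on $B_1\cap\{x_d=0\}$; together with the interior harmonicity (and odd reflection) this makes $D$ a classical solution of its boundary value problem.

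For the \emph{Neumann condition $\partial_{x_d}M=0$ on $\{x_d=0\}$}, both auxiliary functions are needed. By \cref{r:superharmonicity}, $z_n:=\sqrt{u_n v_n}$ is a viscosity supersolution and $W_n:=\frac12(u_n+v_n)$ a viscosity subsolution of the one-phase problem \eqref{e:one-phase} (up to the $O(\eps_n^2)$ right-hand side, negligible at the linearization scale), and both are $\eps_n$-flat in the direction $e_d$; by \cref{c:convergence} their linearizations $\widetilde w_n=(z_n-x_d)/\eps_n$ and $\widetilde W_n=\frac12(\widetilde u_n+\widetilde v_n)$ both converge, in the graph-Hausdorff sense, to $\frac12(\widetilde u_\infty+\widetilde v_\infty)=M$. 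Running the De Silva linearization argument \cite{desilva} with the usual touching barriers — separately on the subsolution $W_n$, which forces $\partial_{x_d}M\ge0$, and on the supersolution $z_n$, which forces $\partial_{x_d}M\le0$ — shows that $M$ is simultaneously a viscosity sub- and supersolution of $\{\Delta h=0 \text{ in }B_1^+,\ \partial_{x_d}h=0 \text{ on }\{x_d=0\}\}$; being already harmonic and with smooth data, $M$ is then a classical solution (e.g. by even reflection).

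The interior step and the algebra identifying $M$ with the limits of $\widetilde w_n$ and $\widetilde W_n$ are routine. The main obstacle is the last step: one must verify that De Silva's construction of touching barriers goes through for the present $z_n$ and $W_n$ — which is exactly where the partial Harnack inequality \cref{t:partial_harnack1} and the resulting compactness are essential — and one must exploit the asymmetry ``$z_n$ supersolution, $W_n$ subsolution'' to obtain \emph{both} inequalities on $\partial_{x_d}M$. A minor additional care is needed to transfer the free boundary identity $\widetilde u_n=\widetilde v_n$ on $\partial\Omega_n$ to the limit through the Hausdorff convergence of the graphs.
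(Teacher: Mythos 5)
Your proposal is correct and follows essentially the same route as the paper: interior harmonicity by local uniform convergence of the (nearly) harmonic $\widetilde u_n,\widetilde v_n$, the Dirichlet condition for $D$ from the identity $\widetilde u_n=\widetilde v_n$ on $\partial\Omega_n$ combined with the Hausdorff convergence of the graphs, and the Neumann condition for $M$ by transferring touching polynomials to the supersolution $\sqrt{u_nv_n}$ (touching from below, giving $\partial_{x_d}M\le 0$) and to the subsolution $\frac12(u_n+v_n)$ (touching from above, giving $\partial_{x_d}M\ge 0$) and then invoking \cref{l:viscosity}, which is exactly the paper's Steps 3--4. The only difference is that you leave the touching step at the level of a citation to De Silva's argument, while the paper carries it out explicitly with the test functions $\varphi(x)=x_d+\eps_nP(x)$ and uses super/harmonicity to force the touching point onto $\partial\Omega_n$; this is a matter of detail rather than of substance.
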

\begin{proof}	
We divide the proof in several steps.\medskip

\noindent{\it Step 1. Equations in $\{x_d>0\}$.} First we notice that the equation
$$\Delta M=\Delta D=0\quad\text{in}\quad B_1\cap\{x_d>0\}$$
follows from the fact that on every compact subset of $B_1\cap\{x_d>0\}$, the functions $\widetilde u_n$ and $\widetilde v_n$ given by \eqref{e:wildetilde-u-n-v-n}, are harmonic and
converge uniformly to $\widetilde u_\infty$ and $\widetilde v_\infty$. \medskip

\noindent{\it Step 2. Boundary condition for $D$.} In order to prove the boundary condition
$$D=0\quad\text{on}\quad B_1\cap\{x_d=0\},$$
we notice that the graphs of $\widetilde u_n$ and $\widetilde v_n$ over $\partial\Omega_n$ are both given by the graph of the function $-\frac{1}{\eps_n}x_d$. Thus, by the Hausdorff convergence of the graphs, we get that
$$u_\infty=v_\infty\quad\text{on}\quad B_1\cap\{x_d=0\}.$$
\smallskip

Finally, it remains to prove that
$$\frac{\partial M}{\partial x_d}=0\quad\text{on}\quad B_1\cap\{x_d=0\}$$
is satisfied in the  viscosity sense.\\ Notice that, the fact that $M$ is a classical solution of \eqref{e:lineariz} follows by \cite[Lemma 2.6]{desilva}).\\\smallskip

\noindent{\it Step 3. The boundary condition for $M$ from below.} Suppose that a quadratic polynomial $P$ touches $M$ strictly from below at a point $x_0 \in \{x_d=0\}$. We will show that $\partial_d P(x_0)\le0$. Therefore, suppose by contradiction that
\begin{equation}\label{e:assurdo-from-below}
\frac{\partial P}{\partial x_d}(x_0)>0,
\end{equation}
and notice that we can assume that $\Delta P>0$ in a neighborhood of $x_0$. Let now
$$\widetilde w_n:=\frac{\sqrt{u_nv_n}-x_d}{\eps_n}\,:\,\overline\Omega_n\to\R.$$
By \cref{c:convergence}, we have that the sequence of graphs of $\widetilde w_n$ over $\overline\Omega_n$ converge in the Hausdorff sense to the graph of $M$ over $B_1\cap\{x_d\ge 0\}$.
In particular, this means that the graph of $P$ touches from {below} the graph of $\widetilde w_n$ at some point $x_n\in \overline\Omega_n$. Since $\widetilde w_n$ is superharmonic in $\Omega_n$ (see \cref{r:superharmonicity}), we have that necessary $x_n\in\partial\Omega_n$. But then, we have
$$P(x)\le \frac{\sqrt{u_nv_n}-x_d}{\eps_n}\quad\text{for}\quad x\in\overline\Omega_n,$$
with an equality when $x=x_n$, which can be written as
$$\eps_nP(x)+x_d\le \sqrt{u_n(x)v_n(x)}\quad\text{for}\quad x\in\overline\Omega_n.$$
Setting
\begin{equation}\label{e-linearized-equation-varphi}
\varphi(x):=\eps_nP(x)+x_d,
\end{equation}
we get that $\varphi_+$ touches
$\sqrt{u_nv_n}\ $
from below at $x_n$. On the other hand, by \cref{l:viscosity} we know that
$$1+\eps\frac{\partial P}{\partial x_d}(x_n)=\partial_{x_d}\varphi(x_n)\le |\nabla \varphi(x_n)|\le 1,$$
which is a contradiction with \eqref{e:assurdo-from-below}, as we claimed.\medskip

\noindent{\it Step 4. The boundary condition for $M$ from above.} Suppose that a quadratic polynomial $P$ touches $M$ strictly from above at a point $y_0\in \{x_d=0\}$. We will prove that $\partial_d P(y_0)\ge0$. Conversely, suppose that
\begin{equation}\label{e:assurdo-from-above}
\frac{\partial P}{\partial x_d}(y_0)<0,
\end{equation}
and notice that we can assume that $\Delta P<0$ in a neighborhood of $y_0$. By the Hausdorff convergence of the graphs, the graph of $P$ touches from above the graph of {$\frac12(\widetilde u_n+\widetilde v_n)$} at a point $y_n \in \overline\Omega_n$. Since $\widetilde u_n$ and $\widetilde v_n$ are harmonic in $\Omega_n$ we have that $y_n\in\partial\Omega_n$. But then, we have
{$$P(x)\ge \frac12\left(\frac{u_n-x_d}{\eps_n}+\frac{v_n-x_d}{\eps_n}\right)\quad\text{for}\quad x\in\overline\Omega_n,$$}
with an equality when $x=y_n$, which can be written as
$$x_d + \eps_n P(x)\ge \frac{u_n(x)+v_n(x)}{2}\quad\text{for}\quad x\in\overline\Omega_n.$$
Let now
\begin{equation}\label{e-linearized-equation-varphi}
\psi(x):=x_d + \eps_n P(x),
\end{equation}
Then, $\psi_+$ touches $\frac12\big(u_n(x)+v_n(x)\big)$ from above at $y_n$. Thus, by \cref{l:viscosity},
$$1\le |\nabla \psi(y_n)|^2 
=1+2\eps_n\frac{\partial P}{\partial x_d}(y_n)+\eps_n^2|\nabla P(y_n)|^2,$$
which can be written as
$$\frac{\partial P}{\partial x_d}(y_n)+\frac{\eps_n}2|\nabla P(y_n)|^2\ge 0,$$
Passing to the limit as $n\to\infty$, we get $\displaystyle\frac{\partial P}{\partial x_d}(y_0)\ge 0,$ which is a contradiction.
\end{proof}	
{\begin{remark}
For the vectorial Bernoulli problem, in \cite{krli, desilva-tortone} the authors proved that the linearized problem arising by the improvement of flatness technique is a system of decoupled equations in which the first component satisfies a Neumann problem, while the others have Dirichlet boundary conditions. On the contrary, in our case, the nonlinear formulation of the free boundary condition \eqref{e:boundary_condition} requires to consider suitable linear combinations of the solutions $u, v$ in order to detect the problem solved by the limits $u_\infty, v_\infty$.
\end{remark}
}
{For the sake of completeness, we briefly sketch the proof of the decay for the solutions of the linearized problem \eqref{e:lineariz}, which we used in the proof of \cref{t:improvement_of_flatness}.}
\begin{lemma}[First and second order estimates for $\widetilde u_\infty$ and $\widetilde v_\infty$]\label{l:second-order-estimates}
Let $\widetilde u_\infty$ and $\widetilde v_\infty$ be as in \cref{l:linearized}. Then $\widetilde u_\infty$ and $\widetilde v_\infty$	are $C^\infty$ in $B_1\cap\{x_d\ge 0\}$ and we have the estimates
	\begin{equation}\label{e:gradient_estimate}
\|\nabla \widetilde u_\infty\|_{L^\infty(B_{\sfrac{1}2}\cap\{x_d\ge 0\})}+\|\nabla \widetilde v_\infty\|_{L^\infty(B_{\sfrac{1}2}\cap\{x_d\ge 0\})}\le {C_d}\ ,
\end{equation}
and
\begin{equation}\label{e:second_derivative_estimate}
\begin{cases}
\big|\widetilde u_\infty(x)-x\cdot \nabla \widetilde u_\infty(0)\big|\le {C_d}|x|^2\smallskip\\
\big|\widetilde v_\infty(x)-x\cdot \nabla \widetilde v_\infty(0)\big|\le {C_d}|x|^2
\end{cases}
\qquad\text{for every}\qquad x\in B_{\sfrac{1}{2}}\cap\{x_d\ge 0\}\ .
\end{equation}
Moreover, we have
\begin{equation}\label{e:gradient-torsion}
\nabla\widetilde u_\infty(0)=\nabla\widetilde v_\infty(0)+e_d\frac{\partial (\widetilde u_\infty-\widetilde v_\infty)}{\partial x_d}(0).
\end{equation}
\end{lemma}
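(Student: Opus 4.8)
The plan is to reduce the whole statement to the standard interior estimates for harmonic functions, by exploiting the description of $M=\tfrac12(\widetilde u_\infty+\widetilde v_\infty)$ and $D=\tfrac12(\widetilde u_\infty-\widetilde v_\infty)$ obtained in \cref{l:linearized} together with a reflection argument. First recall that, by the construction in \cref{c:convergence}, the functions $\widetilde u_n$ and $\widetilde v_n$ take values in $[-1,1]$ wherever they are defined (a direct consequence of the flatness hypothesis), so their Hausdorff limits satisfy $\|\widetilde u_\infty\|_{L^\infty(\overline{B_1^+})}\le 1$ and $\|\widetilde v_\infty\|_{L^\infty(\overline{B_1^+})}\le 1$; in particular $\|M\|_{L^\infty(B_1^+)}\le1$ and $\|D\|_{L^\infty(B_1^+)}\le1$. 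By \cref{l:linearized}, $M$ is a classical solution of the homogeneous Neumann problem and $D$ of the homogeneous Dirichlet problem on $B_1^+=B_1\cap\{x_d>0\}$.

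Next I would extend $M$ by even reflection, $\overline M(x',x_d):=M(x',|x_d|)$, and $D$ by odd reflection, $\overline D(x',x_d):=D(x',x_d)$ for $x_d\ge0$ and $\overline D(x',x_d):=-D(x',-x_d)$ for $x_d<0$. Since $M$ and $D$ are classical solutions up to the flat part of the boundary, we have $\partial_{x_d}M=0$ on $\{x_d=0\}$ and, because $D\equiv0$ there, also $\nabla' D=0$ on $\{x_d=0\}$ (with $\nabla'$ the tangential gradient); this makes both $\overline M$ and $\overline D$ of class $C^1$ across the hyperplane and harmonic in $B_1\setminus\{x_d=0\}$. A $C^1$ function that is harmonic away from a hyperplane and whose normal derivative has no jump across it is weakly harmonic, hence — by Weyl's lemma — smooth and harmonic in the whole $B_1$; thus $\overline M,\overline D\in C^\infty(B_1)$ with $L^\infty$ norm $\le1$.

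The classical interior estimates for harmonic functions then give $\|\nabla\overline M\|_{L^\infty(B_{1/2})}+\|\nabla\overline D\|_{L^\infty(B_{1/2})}\le C_d$ and, via a first order Taylor expansion at $0$ together with the interior bound on the second derivatives,
$$\big|\overline M(x)-\overline M(0)-x\cdot\nabla\overline M(0)\big|\le C_d|x|^2,\qquad\big|\overline D(x)-\overline D(0)-x\cdot\nabla\overline D(0)\big|\le C_d|x|^2$$
for $x\in B_{1/2}$. Restricting to $\{x_d\ge0\}$ and using $\widetilde u_\infty=M+D$ and $\widetilde v_\infty=M-D$ yields the $C^\infty$ regularity up to $\{x_d\ge0\}$, the gradient estimate \eqref{e:gradient_estimate} and the second order estimates \eqref{e:second_derivative_estimate}. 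For the identity \eqref{e:gradient-torsion}, subtract: $\nabla\widetilde u_\infty(0)-\nabla\widetilde v_\infty(0)=2\nabla D(0)$; since $D$ vanishes identically on $\{x_d=0\}$ and is smooth up to it, its tangential gradient at $0$ vanishes, so $\nabla D(0)=\big(\partial_{x_d}D(0)\big)e_d=\tfrac12 e_d\,\partial_{x_d}(\widetilde u_\infty-\widetilde v_\infty)(0)$, which gives \eqref{e:gradient-torsion} after rearranging. (By the same token, $\partial_{x_d}M=0$ on $\{x_d=0\}$ forces $\nabla M(0)\perp e_d$, the orthogonality used in the proof of \cref{t:improvement_of_flatness}.)

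The argument is essentially routine once \cref{l:linearized} is available; the only point needing a bit of care is that the reflections $\overline M,\overline D$ are genuinely harmonic across $\{x_d=0\}$ and not merely continuous there — which hinges on $M$ and $D$ being classical (not just viscosity) solutions of the mixed boundary value problems up to the flat boundary, exactly what \cref{l:linearized} guarantees (via \cite[Lemma 2.6]{desilva}).
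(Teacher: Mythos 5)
Your proposal is correct and follows essentially the same route as the paper: extend $M$ by even reflection and $D$ by odd reflection to harmonic functions on $B_1$, apply the classical interior gradient and second-order estimates, and deduce \eqref{e:gradient-torsion} from $D\equiv 0$ on $\{x_d=0\}$. The only difference is that you spell out the justification (matching of normal derivatives plus Weyl's lemma) that the reflected functions are genuinely harmonic across the hyperplane, which the paper simply asserts.
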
	
\begin{proof}	
Let $M$ and $D$ be as in \cref{l:linearized}. Then, both can be extended to (resp. an even and an odd) harmonic functions in the ball $B_1$. We can the use the classical gradient and second order estimates (see for instance \cite[Lemma 7.17]{velectures}) for a harmonic function $h:B_R\to\R$, that is,
	\begin{equation*}
	\|\nabla h\|_{L^\infty(B_{\sfrac{R}2})}\le \frac{C_d}{R}\|h\|_{L^\infty{(B_R)}}\ ,
	\end{equation*}
	and
	\begin{equation*}
	\big|h(x)-h(0)-x\cdot \nabla h(0)\big|\le \frac{C_d}{R^2}|x|^2\|h\|_{L^\infty{(B_R)}}\qquad\text{for every}\qquad x\in B_{\sfrac{R}{2}}\ ,
	\end{equation*}
	where $C_d$ is a dimensional constant. Now since $\widetilde u_\infty(0)=\widetilde v_\infty(0)=0$ and
	$$|\widetilde u_\infty|\le 1\qquad\text{and}\qquad |\widetilde v_\infty|\le 1\qquad\text{in}\qquad B_1\cap\{x_d\ge 0\},$$
and	since $M=\widetilde u_\infty+\widetilde v_\infty$ and $D=\widetilde u_\infty-\widetilde v_\infty$, we get that
	\begin{equation*}
\begin{cases}
	\big|D(x)-x\cdot \nabla D(0)\big|\le {C_d}|x|^2\smallskip\\
	
	\big|M(x)-x\cdot \nabla M(0)\big|\le {C_d}|x|^2
	\end{cases}\qquad\text{for every}\qquad x\in B_{\sfrac{1}{2}}\cap\{x_d\ge 0\}\ ,
	\end{equation*}
	which gives \eqref{e:second_derivative_estimate}. Finally, \eqref{e:gradient-torsion} follows from the fact that $D\equiv 0$ on $\{x_d=0\}$.
\end{proof}

\end{document}